\newtheorem{theo}{{\sc Theorem}}[section]
\newtheorem{defn}[theo]{{\sc Definition}}
\newtheorem{cor}[theo]{{\sc Corollary}}
\newtheorem{lem}[theo]{{\sc Lemma}}
\newtheorem{prop}[theo]{{\sc Proposition}}
\newenvironment{rem}{\medskip\noindent{\it Remark:\/} }{\medskip}
\def\blfootnote{\gdef\@thefnmark{}\@footnotetext}
\def\@cite#1#2{[\textbf{#1}\if@tempswa, #2\fi]}
\newcounter{proofpart}
\xpretocmd{\proof}{\setcounter{proofpart}{0}}{}{}
\newcommand{\proofpart}[2]{%
  \par
  \addvspace{\medskipamount}%
  \stepcounter{proofpart}%
  \noindent\emph{#1 \theproofpart: #2}\par\nobreak\smallskip
  \@afterheading
}
\DeclareFontFamily{U}{mathx}{\hyphenchar\font45}
\DeclareFontShape{U}{mathx}{m}{n}{
      <5> <6> <7> <8> <9> <10>
      <10.95> <12> <14.4> <17.28> <20.74> <24.88>
      mathx10
      }{}
\DeclareSymbolFont{mathx}{U}{mathx}{m}{n}
\DeclareMathAccent{\widecheck}{0}{mathx}{"71}
\DeclareMathAccent{\wideparen}{0}{mathx}{"75}
\title{On the Intertwining Map between Coulomb and Hyperbolic Scattering}
\author{Nicholas Lohr}
\address{Northwestern University, Evanston, IL, 60208}
\email{nlohr@math.northwestern.edu}
\date{\today}
\begin{document}

\blfootnote{The author was partially supported by NSF RTG grant
  DMS-2136217.\par
  The author has no competing interests to declare that are relevant to the content of this article.}
\begin{abstract}
We construct a unitary operator between Hilbert spaces of generalized eigenfunctions of Coulomb operators and the Laplace-Beltrami operator of hyperbolic space that intertwines their respective Poisson operators on $L^2(\mathbb{S}^{d-1})$. The constructed operator generalizes Fock's unitary transformation, originally defined between the discrete spectra of the attractive Coulomb operator and the Laplace-Beltrami operator on the sphere, to the setting of continuous spectra. Among other connections, this map explains why the scattering matrices are the same in these two different settings, and it also provides an explicit formula for the Poisson operator of the Coulomb Hamiltonian.
\end{abstract}
\vskip -4\baselineskip
\maketitle
\section{Introduction}
It is well-known that the scattering matrix $S_{\mathbb{H}^d}(\lambda):L^2(\mathbb{S}^{d-1}) \to L^2(\mathbb{S}^{d-1})$ for the nonnegative Laplace-Beltrami operator $-\Delta_{\mathbb{H}^d}$ of 
hyperbolic space, $\mathbb{H}^d, d \geq 2$, up to constant phase factors, is a pseudodifferential operator with Schwartz
kernel and functional calculus expression
\begin{equation}\label{eq:Smatrix}
S_{\mathbb{H}^d}(\lambda; \theta,\theta')=2^{-2\lambda i}\pi^{-\frac{d-1}{2}}\frac{\Gamma(\frac{d-1}{2}-\lambda i)}{\Gamma(\lambda i)}\frac{1}{|\theta-\theta'|^{d-1-2\lambda i}},\qquad S_{\mathbb{H}^d}(\lambda)=\frac{\Gamma(A+\frac{d-1}{2}-\lambda i)}{\Gamma(A+\frac{d-1}{2}+\lambda i)},
\end{equation}
respectively, where $A \coloneqq
\sqrt{-\Delta_{\mathbb{S}^{d-1}}+\frac{(d-2)^2}{4}}-\frac{d-2}{2} \geq 0$ and
$\lambda\geq 0$ parametrizes the spectrum of
$-\Delta_{\mathbb{H}^d}$ by
$\operatorname{spec}(-\Delta_{\mathbb{H}^d})=\{\lambda^2+\frac{(d-1)^2}{4}\mid\lambda
\geq 0\}$ (cf. \cite[\S 4]{B10} where
$s=\frac{d-1}{2}+\lambda i$ in his notation). It is less known that the
semiclassical scattering matrix
$S_{\widehat{H}_{\hbar}}(\hbar,\lambda)$ for the (attractive) Coulomb operator (also known as the spinless, nonrelativistic and unit-normalized hydrogen atom)
\begin{equation}\label{eq:operator}
\widehat{H}_{\hbar}\coloneqq -\frac{\hbar^2}{2}\Delta_{\mathbb{R}^d}-\frac{1}{|x|}
\end{equation}
is the \textit{same} as
$S_{\mathbb{H}^d}(\lambda)$ (see \cite[\S 4.2]{Y97} where $E=\frac{1}{2\hbar^2 \lambda^2},\alpha=-\lambda,k=\frac{1}{\hbar^2\lambda},\gamma=-\frac{2}{\hbar^2},\delta=\frac{d-1}{2}$ in his notation). Namely, for any $\hbar,\lambda>0$,
\begin{equation}\label{eq:samee}
S_{\widehat{H}_{\hbar}}(\hbar,\lambda)=S_{\mathbb{H}^d}(\lambda),
\end{equation}
where the absolutely continuous
spectrum of $\widehat{H}_{\hbar}$ is parametrized by
$$\operatorname{spec}_{\text{ac}}(\widehat{H}_{\hbar})=\{0\} \cup
\Big\{E_{\lambda}(\hbar) \coloneqq \frac{1}{2\hbar^2\lambda^2}\mid \lambda>0\Big\}.$$ Furthermore, the so-called differential of the quantum cross sectional area
$\sigma_d(\lambda)d\lambda$ for the Coulomb operator is, up to a $d$-dimensional constant, the
same as the spectral measure
$|\mathfrak{c}(\lambda)|^{-2}d\lambda$ of $\mathbb{H}^d$ where
$\mathfrak{c}(\lambda)=\frac{2^{d-2}\Gamma(\frac{d}{2})\Gamma(\lambda i)}{\sqrt{\pi}\Gamma(\frac{d-1}{2}+\lambda i )}$ is the Harish-Chandra function for $\mathbb{H}^d$
(see \cite[(1.2)]{Y97} and \cite[(3.1.3)]{B92}, respectively).
\par In this article, we explain this connection by constructing an explicit unitary operator
$\mathcal{V}_{\hbar,\lambda}$ between Hilbert spaces of
generalized eigenfunctions of $\widehat{H}_{\hbar}$ and
$-\Delta_{\mathbb{H}^d}$ with $L^2$ boundary data at the sphere at `infinity' that conjugates the respective Poisson
operators. Namely, for $\hbar>0,\lambda \in \mathbb{R}_{\neq
  0}$, and $d \geq 2$, define the Hilbert spaces
$\mathscr{H}_{\widehat{H}_{\hbar}}(\hbar,\lambda)$ and
$\mathscr{H}_{\mathbb{H}^d}(\lambda)$ of generalized
eigenfunctions in the variables $x\in \mathbb{R}^d$ and $u \in
\mathbb{H}^d$ by
\begin{equation}\label{eq:Hilbert}
\mathclap{
\begin{aligned}
\mathscr{H}_{\widehat{H}_{\hbar}}(\hbar,\lambda) &\coloneqq 
                                                   \Big\{ F_{\widehat{H}_{\hbar}}^{\hbar,\lambda}(x)
  \coloneqq
                                                   \int_{\mathbb{S}^{d-1}}\psi_{\hbar,\lambda}(x;\theta)f(\theta)d\theta \mid f
                                                   \in
                                                   L^2(\mathbb{S}^{d-1})
                                                   \Big\},\ \lVert F_{\widehat{H}_{\hbar}}^{\hbar,\lambda}\rVert_{\mathscr{H}_{\widehat{H}_{\hbar}}(\hbar,\lambda)}\coloneqq \lVert f \rVert_{L^2},\\
\mathscr{H}_{\mathbb{H}^d}(\lambda) &\coloneqq \Big\{F_{\mathbb{H}^d}^{\lambda}(u) \coloneqq  \int_{\mathbb{S}^{d-1}}e_{\lambda}(u;\theta)f(\theta)d\theta\mid f \in L^2(\mathbb{S}^{d-1}) \Big\},\ \lVert F_{\mathbb{H}^d}^{\lambda}\rVert_{\mathscr{H}_{\mathbb{H}^d}(\lambda)}\coloneqq \lVert f \rVert_{L^2},
\end{aligned}}
\end{equation}
where $\psi_{\hbar,\lambda}(\bullet; \theta)$ and
$e_{\lambda}(\bullet;\theta)$ are the plane waves for the Coulomb operator
and hyperbolic space, respectively (defined in
\eqref{eq:ppw2},\eqref{eq:hypplane2}). For $f \in
L^2(\mathbb{S}^{d-1})$, we have
\begin{equation}\label{eq:forjaredsunderstanding}
\widehat{H}_{\hbar}F_{\widehat{H}_{\hbar}}^{\hbar,\lambda}=E_{\lambda}(\hbar)F_{\widehat{H}_{\hbar}}^{\hbar,\lambda},
\qquad
-\Delta_{\mathbb{H}^d}F_{\mathbb{H}^d}^{\lambda}=\Big(\lambda^2+\frac{(d-1)^2}{4}
\Big)F_{\mathbb{H}^d}^{\lambda}, 
\end{equation}
and hence $F_{\widehat{H}_{\hbar}}^{\hbar,\lambda}$ and
$F_{\mathbb{H}^d}^{\lambda}$ lie in the absolutely continuous
spectrum of $\widehat{H}_{\hbar}$ and $-\Delta_{\mathbb{H}^d}$,
respectively. Let
\begin{equation}\label{eq:poissonops}
\begin{aligned}
P_{\widehat{H}_{\hbar}}(\hbar,\lambda)&:L^2(\mathbb{S}^{d-1})
  \to \mathscr{H}_{\widehat{H}_{\hbar}}(\hbar,\lambda),\quad  f
  \mapsto F_{\widehat{H}_{\hbar}}^{\hbar,\lambda}\\P_{\mathbb{H}^d}(\lambda)&:L^2(\mathbb{S}^{d-1})
  \to\mathscr{H}_{\mathbb{H}^d}(\lambda),\quad f \mapsto
  F_{\mathbb{H}^d}^{\lambda}
  \end{aligned}
\end{equation}
denote the Poisson operators (see \cite[\S 4]{B10} where
$P_{\mathbb{H}^d}(\lambda)=E_0(s)$ in his notation). With the
operators
\begin{align}
  \mathcal{I}[f] &\coloneqq f\Big(\frac{\bullet}{|\bullet|^2}\Big),\qquad  M[f] \coloneqq
                         \bigg|\frac{|\bullet|-1}{2}\bigg|^{\frac{d+1}{2}}f,\qquad  R_{\{|\bullet|>1\}} [f] \coloneqq f|_{\{|\bullet|>1\}},\label{eq:otherops}\\
  \widehat{\mathcal{D}}_{\frac{1}{\hbar\lambda}}[f]&\coloneqq
                                                     \frac{1}{|\hbar\lambda|^{\frac{d}{2}}}f\Big(\frac{\bullet}{\hbar\lambda}\Big),\qquad \mathcal{F}_{\hbar}[f] \coloneqq \frac{1}{(2\pi \hbar)^{\frac{d}{2}}}\int_{\mathbb{R}^d}f(x)e^{-\frac{i}{\hbar}\langle x,\bullet \rangle}dx \label{eq:mainops},
\end{align}
we prove the main theorem of this article:
\begin{theo}\label{th:main} For every $\hbar>0$ and $\lambda \in \mathbb{R}_{\neq 0}$, the unitary
operator
$\mathcal{V}_{\hbar,\lambda}:\mathscr{H}_{\widehat{H}_{\hbar}}(\hbar,\lambda)\to
\mathscr{H}_{\mathbb{H}^d}(\lambda)$
given by
\begin{equation}\label{eq:mainmap}
\mathcal{V}_{\hbar,\lambda} \coloneqq \mathcal{I} \circ M \circ R_{\{|\bullet|>1\}}  \circ \widehat{\mathcal{D}}_{\frac{1}{\hbar\lambda}} \circ \mathcal{F}_{\hbar}
\end{equation}
makes the following diagram commute:
\begin{equation}\label{eq:diagram}\begin{tikzcd}[column sep=0.25em]
	& {L^2(\mathbb{S}^{d-1})} \\
	{\mathscr{H}_{\widehat{H}_{\hbar}}(\hbar,\lambda)} && {\mathscr{H}_{\mathbb{H}^d}(\lambda)}
	\arrow["{P_{\widehat{H}_{\hbar}}(\hbar,\lambda)}"', from=1-2, to=2-1]
	\arrow["{P_{\mathbb{H}^d}(\lambda)}", from=1-2, to=2-3]
	\arrow["{\mathcal{V}_{\hbar,\lambda}}"', from=2-1, to=2-3]
  \end{tikzcd}.\end{equation}
  \end{theo}
  \begin{rem}
It is well-known that
$\mathscr{H}_{\mathbb{H}^d}(\lambda)$ is dense in
\begin{equation*}
\mathscr{E}_{\mathbb{H}^d}(\lambda) \coloneqq \bigg\{f \in C^{\infty}(\mathbb{H}^d): -\Delta_{\mathbb{H}^d}f=\Big(\lambda^2+\Big(\frac{d-1}{2}\Big)^2\Big)f\bigg\},
\end{equation*}
where the space $\mathscr{E}_{\mathbb{H}^d}(\lambda)$ is
topologized by the seminorms $\lVert f \rVert_{m}^{K} \coloneqq
\sum_{|\boldsymbol{\alpha}|\leq m}\sup_{x \in K}|D^{\boldsymbol{\alpha}}f(x)|$ for $m \in
\mathbb{Z}_{\geq 0}$ and $K \subset \mathbb{H}^d$ compact (see \cite[Introduction, Lemma 4.20]{H84} for $d=2$ and
\cite[Chapter IV, Lemma 2.2]{H70} for the general case). Additionally, one can show with the Helgason-Fourier transform and Plancherel's theorem thereof
that $$L^2(\mathbb{H}^d) \cong
\int_{[0,\infty)}^{\oplus}\mathscr{H}_{\mathbb{H}^d}(\lambda)\frac{2^{d-1}d\lambda}{2\pi |\mathbb{S}^{d-1}||\mathfrak{c}(\lambda)|^2}. $$
With this perspective, Theorem \ref{th:main} in some sense `densely' relates
the nonzero absolutely continuous spectrum of the operator
$-\Delta_{\mathbb{H}^d}$ to that of $\widehat{H}_{\hbar}$.
\end{rem}
\par This theorem has several immediate corollaries.
\begin{cor}\label{cor:intertwine} $\displaystyle \mathcal{V}_{\hbar,\lambda}\widehat{H}_{\hbar}\mathcal{V}_{\hbar,\lambda}^{-1}=\frac{1}{2\hbar^2}\Big(-\Delta_{\mathbb{H}^d}-\frac{(d-1)^2}{4}\Big)^{-1} \text{ on }\mathscr{H}_{\mathbb{H}^d}(\lambda) \text{ for }\hbar>0,\lambda \in \mathbb{R}_{\neq 0}$.
\end{cor}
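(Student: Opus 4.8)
The plan is to reduce the claimed operator identity to the two eigenvalue relations in \eqref{eq:forjaredsunderstanding} together with the commuting diagram \eqref{eq:diagram}. First I would note that every element of $\mathscr{H}_{\mathbb{H}^d}(\lambda)$ has the form $F_{\mathbb{H}^d}^{\lambda}=P_{\mathbb{H}^d}(\lambda)f$ for some $f\in L^2(\mathbb{S}^{d-1})$, and that by Theorem \ref{th:main} (the unitarity of $\mathcal{V}_{\hbar,\lambda}$ and the commutativity of \eqref{eq:diagram}) we have $\mathcal{V}_{\hbar,\lambda}^{-1}F_{\mathbb{H}^d}^{\lambda}=P_{\widehat{H}_{\hbar}}(\hbar,\lambda)f=F_{\widehat{H}_{\hbar}}^{\hbar,\lambda}\in\mathscr{H}_{\widehat{H}_{\hbar}}(\hbar,\lambda)$. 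So it suffices to check the identity on such $F_{\mathbb{H}^d}^{\lambda}$.

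Next I would apply the first relation in \eqref{eq:forjaredsunderstanding}, namely $\widehat{H}_{\hbar}F_{\widehat{H}_{\hbar}}^{\hbar,\lambda}=E_{\lambda}(\hbar)F_{\widehat{H}_{\hbar}}^{\hbar,\lambda}=\tfrac{1}{2\hbar^2\lambda^2}F_{\widehat{H}_{\hbar}}^{\hbar,\lambda}$, and then apply $\mathcal{V}_{\hbar,\lambda}$ and use linearity together with Theorem \ref{th:main} once more to obtain $\mathcal{V}_{\hbar,\lambda}\widehat{H}_{\hbar}\mathcal{V}_{\hbar,\lambda}^{-1}F_{\mathbb{H}^d}^{\lambda}=\tfrac{1}{2\hbar^2\lambda^2}F_{\mathbb{H}^d}^{\lambda}$. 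On the other hand, the second relation in \eqref{eq:forjaredsunderstanding} gives $\bigl(-\Delta_{\mathbb{H}^d}-\tfrac{(d-1)^2}{4}\bigr)F_{\mathbb{H}^d}^{\lambda}=\lambda^2F_{\mathbb{H}^d}^{\lambda}$; since $\lambda\in\mathbb{R}_{\neq0}$ we have $\lambda^2>0$, so on the ``single-eigenvalue'' space $\mathscr{H}_{\mathbb{H}^d}(\lambda)$ the operator $-\Delta_{\mathbb{H}^d}-\tfrac{(d-1)^2}{4}$ is invertible and acts by the scalar $\lambda^{-2}$, whence $\tfrac{1}{2\hbar^2}\bigl(-\Delta_{\mathbb{H}^d}-\tfrac{(d-1)^2}{4}\bigr)^{-1}F_{\mathbb{H}^d}^{\lambda}=\tfrac{1}{2\hbar^2\lambda^2}F_{\mathbb{H}^d}^{\lambda}$. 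Comparing the two expressions establishes the corollary.

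There is essentially no serious obstacle here: the content is entirely contained in Theorem \ref{th:main} and the eigenfunction equations \eqref{eq:forjaredsunderstanding}. The only point requiring a word of care is interpreting $\bigl(-\Delta_{\mathbb{H}^d}-\tfrac{(d-1)^2}{4}\bigr)^{-1}$, and this is precisely why the hypothesis $\lambda\in\mathbb{R}_{\neq0}$ is needed — it guarantees the eigenvalue $\lambda^2$ being inverted is nonzero. (One may also remark that since $\mathscr{H}_{\mathbb{H}^d}(\lambda)\subset\mathscr{E}_{\mathbb{H}^d}(\lambda)$ consists of smooth solutions of the Helmholtz equation, $-\Delta_{\mathbb{H}^d}$ acts on it in the classical pointwise sense, so no domain or self-adjointness subtleties intervene.)
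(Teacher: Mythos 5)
Your proof is correct and is exactly the argument the paper gives: both sides act as multiplication by $E_{\lambda}(\hbar)=\frac{1}{2\hbar^2\lambda^2}$ on $\mathscr{H}_{\mathbb{H}^d}(\lambda)$, by the eigenvalue relations \eqref{eq:forjaredsunderstanding} together with Theorem \ref{th:main}. You simply spell out the details the paper leaves as a one-line remark.
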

This follows since both sides act as multiplication by
$E_{\lambda}(\hbar) \coloneqq \frac{1}{2\hbar^2\lambda^2}$ on
$\mathscr{H}_{\mathbb{H}^d}(\lambda)$, by \eqref{eq:forjaredsunderstanding}. If we define the
scattering matrices by
\begin{equation}\label{eq:scatmatrices}
S_{\mathbb{H}^d}(\lambda) \coloneqq \overline{P_{\mathbb{H}^d}(-\lambda)^*}P_{\mathbb{H}^d}(\lambda),\qquad S_{\widehat{H}_{\hbar}}(\hbar,\lambda) \coloneqq \overline{P_{\widehat{H}_{\hbar}}(\hbar,-\lambda)^*}P_{\widehat{H}_{\hbar}}(\hbar,\lambda),
\end{equation}
where the bar denotes conjugating the operator by complex conjugation. We have the following corollary:
\begin{cor}\label{cor:scat} $\displaystyle S_{\widehat{H}_{\hbar}}(\hbar,\lambda)=S_{\mathbb{H}^d}(\lambda)$ for $\hbar>0,\lambda \in \mathbb{R}_{\neq 0}$.
\end{cor}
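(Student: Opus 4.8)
The plan is to derive Corollary \ref{cor:scat} formally from Theorem \ref{th:main} together with a single symmetry of $\mathcal{V}_{\hbar,\lambda}$ under $\lambda\mapsto-\lambda$. First I would rewrite the commutativity of \eqref{eq:diagram} as $P_{\widehat{H}_{\hbar}}(\hbar,\lambda)=\mathcal{V}_{\hbar,\lambda}^{-1}P_{\mathbb{H}^d}(\lambda)$, valid for every $\hbar>0$ and every $\lambda\in\mathbb{R}_{\neq0}$, hence also with $\lambda$ replaced by $-\lambda$. Substituting both identities into the definition \eqref{eq:scatmatrices} of $S_{\widehat{H}_{\hbar}}(\hbar,\lambda)$, using that $\mathcal{V}_{\hbar,-\lambda}$ is unitary (so $(\mathcal{V}_{\hbar,-\lambda}^{-1})^{*}=\mathcal{V}_{\hbar,-\lambda}$) and that conjugation $\overline{(\,\cdot\,)}=C(\,\cdot\,)C$ by the involutive antilinear map $C$ (complex conjugation on $L^2$) respects composition and adjoints, one computes
\[
S_{\widehat{H}_{\hbar}}(\hbar,\lambda)
=\overline{\bigl(\mathcal{V}_{\hbar,-\lambda}^{-1}P_{\mathbb{H}^d}(-\lambda)\bigr)^{*}}\;\mathcal{V}_{\hbar,\lambda}^{-1}P_{\mathbb{H}^d}(\lambda)
=\overline{P_{\mathbb{H}^d}(-\lambda)^{*}}\;\Bigl(\overline{\mathcal{V}_{\hbar,-\lambda}}\;\mathcal{V}_{\hbar,\lambda}^{-1}\Bigr)\;P_{\mathbb{H}^d}(\lambda).
\]
Comparing with $S_{\mathbb{H}^d}(\lambda)=\overline{P_{\mathbb{H}^d}(-\lambda)^{*}}\,P_{\mathbb{H}^d}(\lambda)$, the corollary is reduced to the identity $\overline{\mathcal{V}_{\hbar,-\lambda}}=\mathcal{V}_{\hbar,\lambda}$, for then the parenthesized operator is the identity on $\mathscr{H}_{\mathbb{H}^d}(\lambda)$.

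To prove that identity I would inspect the five factors in \eqref{eq:mainmap}. Each of $\mathcal{I}$, $M$, $R_{\{|\bullet|>1\}}$, and $\widehat{\mathcal{D}}_{s}$ for any real $s$ commutes with $C$, being built from pre-composition with a fixed real map, restriction to a fixed set, and multiplication by a fixed real-valued function; hence, using $\tfrac{1}{\hbar(-\lambda)}=-\tfrac{1}{\hbar\lambda}$, one gets $\overline{\mathcal{V}_{\hbar,-\lambda}}=\mathcal{I}\circ M\circ R_{\{|\bullet|>1\}}\circ\widehat{\mathcal{D}}_{-\frac{1}{\hbar\lambda}}\circ\overline{\mathcal{F}_{\hbar}}$. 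It then remains to check $\widehat{\mathcal{D}}_{-\frac{1}{\hbar\lambda}}\circ\overline{\mathcal{F}_{\hbar}}=\widehat{\mathcal{D}}_{\frac{1}{\hbar\lambda}}\circ\mathcal{F}_{\hbar}$. Directly from \eqref{eq:mainops} one has $\overline{\mathcal{F}_{\hbar}}=\mathcal{F}_{\hbar}^{-1}$ (the conjugate of the semiclassical Fourier transform is its inverse), and $\mathcal{F}_{\hbar}^{-1}$ is $\mathcal{F}_{\hbar}$ post-composed with the reflection $\xi\mapsto-\xi$, i.e.\ with $\widehat{\mathcal{D}}_{-1}$; since the unitary dilations obey $\widehat{\mathcal{D}}_{s}\circ\widehat{\mathcal{D}}_{t}=\widehat{\mathcal{D}}_{st}$, taking $s=-\tfrac{1}{\hbar\lambda}$ and $t=-1$ gives $\widehat{\mathcal{D}}_{-\frac{1}{\hbar\lambda}}\circ\widehat{\mathcal{D}}_{-1}=\widehat{\mathcal{D}}_{\frac{1}{\hbar\lambda}}$, which is exactly what is needed. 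Hence $\overline{\mathcal{V}_{\hbar,-\lambda}}=\mathcal{V}_{\hbar,\lambda}$ and therefore $S_{\widehat{H}_{\hbar}}(\hbar,\lambda)=S_{\mathbb{H}^d}(\lambda)$.

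The one point that genuinely warrants care — the main, and rather modest, obstacle — is the bookkeeping of domains, since a priori $\mathcal{V}_{\hbar,-\lambda}$ is the unitary $\mathscr{H}_{\widehat{H}_{\hbar}}(\hbar,-\lambda)\to\mathscr{H}_{\mathbb{H}^d}(-\lambda)$, so one must justify that the composition $\overline{\mathcal{V}_{\hbar,-\lambda}}\,\mathcal{V}_{\hbar,\lambda}^{-1}$ and all the products above are well defined between the asserted spaces. This is harmless: by \eqref{eq:forjaredsunderstanding} one has $E_{-\lambda}(\hbar)=E_{\lambda}(\hbar)$ and $(-\lambda)^{2}=\lambda^{2}$, so $P_{\widehat{H}_{\hbar}}(\hbar,\pm\lambda)$ take values in a common generalized eigenspace, and likewise for $P_{\mathbb{H}^d}(\pm\lambda)$; moreover the explicit formulas \eqref{eq:ppw2}, \eqref{eq:hypplane2} show that $C$ sends $e_{\lambda}(u;\cdot)$ to $e_{-\lambda}(u;\cdot)$ and $\psi_{\hbar,\lambda}(\,\cdot\,;\theta)$ to $\psi_{\hbar,-\lambda}(\,\cdot\,;\theta)$ up to a phase, so $C$ carries $\mathscr{H}_{\mathbb{H}^d}(\lambda)$ onto $\mathscr{H}_{\mathbb{H}^d}(-\lambda)$ and $\mathscr{H}_{\widehat{H}_{\hbar}}(\hbar,\lambda)$ onto $\mathscr{H}_{\widehat{H}_{\hbar}}(\hbar,-\lambda)$, and the $\pm\lambda$ spaces coincide as Hilbert spaces because the Poisson operators at $\lambda$ and $-\lambda$ share the same range with matching norms (equivalently, the scattering matrix is unitary). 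With this understood, every line above is an equality of bounded operators on $L^2(\mathbb{S}^{d-1})$, which completes the proof.
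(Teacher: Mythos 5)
Your proposal is correct and follows essentially the same route as the paper, whose proof consists precisely of citing the unitarity of $\mathcal{V}_{\hbar,\lambda}$ and the identity $\mathcal{V}_{\hbar,\lambda}=\overline{\mathcal{V}_{\hbar,-\lambda}}$; you simply spell out the resulting algebra and verify that identity factor by factor (via $\overline{\mathcal{F}_{\hbar}}=\widehat{\mathcal{D}}_{-1}\circ\mathcal{F}_{\hbar}$ and $\widehat{\mathcal{D}}_{s}\circ\widehat{\mathcal{D}}_{t}=\widehat{\mathcal{D}}_{st}$), together with the domain bookkeeping coming from $\overline{\psi_{\hbar,\lambda}}=\psi_{\hbar,-\lambda}$ and $\overline{e_{\lambda}}=e_{-\lambda}$.
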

This follows from the unitarity of $\mathcal{V}_{\hbar,\lambda}$ and the fact that $\mathcal{V}_{\hbar,\lambda}=\overline{\mathcal{V}_{\hbar,-\lambda}}$. \par The next corollary is a ``hidden'' inversion symmetry in Fourier space of the Coulomb operator generalized eigenfunctions, a quantum analogue of the conservation of the Runge-Lenz vector $R^+$ in classical mechanics (see \eqref{eq:ihatejared}).
\begin{cor}\label{cor:sym} For $\psi \in \mathscr{H}_{\widehat{H}_{\hbar}}(\hbar,\lambda)$,
\begin{equation}\label{eq:invsymm}
(\widehat{\mathcal{D}}_{\frac{1}{\hbar\lambda}}\circ \mathcal{F}_{\hbar})[\psi](\xi)=-\frac{e^{-\pi |\lambda|}}{|\xi|^{d+1}}(\widehat{\mathcal{D}}_{\frac{1}{\hbar\lambda}}\circ \mathcal{F}_{\hbar})[\psi]\Big(\frac{\xi}{|\xi|^2}\Big)\quad \text{when} \quad 0<|\xi|<1.
\end{equation}
\end{cor}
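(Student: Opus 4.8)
The plan is to reduce \eqref{eq:invsymm} to an identity for the rescaled, Fourier-transformed Coulomb plane waves, and then to obtain that identity from the explicit description of these plane waves that already enters the proof of Theorem \ref{th:main}. Writing $\mathcal{D}\coloneqq\widehat{\mathcal{D}}_{\frac1{\hbar\lambda}}\circ\mathcal{F}_\hbar$ and, for $\theta\in\mathbb{S}^{d-1}$, $g_\theta\coloneqq\mathcal{D}[\psi_{\hbar,\lambda}(\bullet;\theta)]$ (the kernel of $\mathcal{D}\circ P_{\widehat{H}_\hbar}(\hbar,\lambda)$), I would first observe that, since $\mathcal{D}$ is linear and every element of $\mathscr{H}_{\widehat{H}_\hbar}(\hbar,\lambda)$ equals $P_{\widehat{H}_\hbar}(\hbar,\lambda)[f]$ for some $f\in L^2(\mathbb{S}^{d-1})$, it is enough to prove
\[
g_\theta(\xi)=-\frac{e^{-\pi|\lambda|}}{|\xi|^{d+1}}\,g_\theta\!\Big(\frac{\xi}{|\xi|^{2}}\Big)\qquad(0<|\xi|<1)
\]
uniformly in $\theta$, and then integrate against $f(\theta)\,d\theta$ and apply Fubini; that $\psi_{\hbar,\lambda}(\bullet;\theta)$ itself lies outside $\mathscr{H}_{\widehat{H}_\hbar}(\hbar,\lambda)$ does not matter, as it enters only under the integral.

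Next I would assemble two closed forms for $g_\theta$. On $\{|\xi|>1\}$, unwinding the maps $\mathcal{I}$ and $M$ of \eqref{eq:mainmap} using the commutativity in Theorem \ref{th:main} (so that $\mathcal{I}\circ M\circ R_{\{|\bullet|>1\}}$ sends the Coulomb $\theta$-plane wave to the hyperbolic one, $e_\lambda(\bullet;\theta)$) gives
\[
g_\theta(\xi)=\Big(\tfrac{|\xi|-1}{2}\Big)^{-\frac{d+1}{2}}\,e_\lambda\!\Big(\frac{\xi}{|\xi|^{2}};\theta\Big)\qquad(|\xi|>1),
\]
with $\xi/|\xi|^2$ a point of $\mathbb{H}^d$, so that $e_\lambda$ is evaluated where it is defined. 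On $\{0<|\xi|<1\}$, I would invoke the explicit Fourier transform of the Coulomb plane wave — a Nordsieck-type integral, the same computation that yields the previous display — now on the other component of $\mathbb{R}^d\setminus\mathbb{S}^{d-1}$, with the branch of its singular factor across $\{|\xi|=1\}$ fixed by the $\mp i0$ boundary condition of the (outgoing) Coulomb generalized eigenfunction; this produces
\[
g_\theta(\xi)=-\,e^{-\pi|\lambda|}\,\Big(\tfrac{|\xi|(1-|\xi|)}{2}\Big)^{-\frac{d+1}{2}}\,e_\lambda(\xi;\theta)\qquad(0<|\xi|<1).
\]

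With both forms at hand, \eqref{eq:invsymm} becomes a one-line substitution. For $0<|\xi|<1$ the point $\xi/|\xi|^2$ has norm $1/|\xi|>1$, and using that the inversion is an involution together with $|\xi/|\xi|^2|=1/|\xi|$, the first display evaluated there reads
\[
g_\theta\!\Big(\frac{\xi}{|\xi|^{2}}\Big)=\Big(\tfrac{1/|\xi|-1}{2}\Big)^{-\frac{d+1}{2}}e_\lambda(\xi;\theta)=|\xi|^{\frac{d+1}{2}}\Big(\tfrac{1-|\xi|}{2}\Big)^{-\frac{d+1}{2}}e_\lambda(\xi;\theta),
\]
so $-e^{-\pi|\lambda|}|\xi|^{-(d+1)}\,g_\theta(\xi/|\xi|^2)=-e^{-\pi|\lambda|}\big(\tfrac{|\xi|(1-|\xi|)}{2}\big)^{-\frac{d+1}{2}}e_\lambda(\xi;\theta)$, which is exactly the second display for $g_\theta(\xi)$. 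Integrating against $f\,d\theta$ finishes the proof.

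The main — and essentially only — obstacle will be the inner-region formula, i.e.\ the scalar $-e^{-\pi|\lambda|}$ relating the two sheets of $g_\theta$. The factor $-1$ is the honest inversion monodromy (the quantum shadow of the conservation of the Runge-Lenz vector noted after the statement); the factor $e^{-\pi|\lambda|}$ is a Gamow/threshold factor arising from the branch behaviour of the Nordsieck integral as $\xi$ crosses the on-shell sphere $\{|\xi|=1\}$ (which is the equator of $\mathbb{S}^{d}$ under Fock's stereographic identification of momentum space with $\mathbb{S}^{d}$), where the power function encoding the singularity of $g_\theta$ must be continued according to the Coulomb $\mp i0$ prescription, contributing a phase whose net effect is $-e^{-\pi\lambda}$ for $\lambda>0$; the case $\lambda<0$ I would deduce from $\mathcal{V}_{\hbar,\lambda}=\overline{\mathcal{V}_{\hbar,-\lambda}}$ (consistently, $e^{-\pi|\lambda|}$ is even in $\lambda$). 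Fixing this scalar, and the branch of the power function, unambiguously is where the care lies; all of it is already carried out in the plane-wave computation behind Theorem \ref{th:main}, so the corollary is genuinely immediate once that is in place.
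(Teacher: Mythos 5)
Your argument is essentially the paper's: Corollary \ref{cor:sym} is read off from the explicit two‑sided formula \eqref{eq:jared} of Lemma \ref{lem:jared}, whose $-\operatorname{sgn}(\lambda)0i$ prescription on $(|\xi|^2-1)^{1+\lambda i}$ produces exactly the factor $-e^{-\pi|\lambda|}$ inside the unit ball, followed by the substitution $\xi\mapsto\xi/|\xi|^2$ and the Fubini step already carried out in Step 2 of the proof of Theorem \ref{th:main}; your detour through $e_\lambda$ on both components is the same computation in different clothing. One cosmetic slip: your conformal factors $\bigl(\tfrac{|\xi|-1}{2}\bigr)^{-\frac{d+1}{2}}$ and $\bigl(\tfrac{|\xi|(1-|\xi|)}{2}\bigr)^{-\frac{d+1}{2}}$ should be $\bigl(\tfrac{|\xi|^2-1}{2}\bigr)^{-\frac{d+1}{2}}$ and $\bigl(\tfrac{1-|\xi|^2}{2}\bigr)^{-\frac{d+1}{2}}$ (as Lemma \ref{lem:jared} and the restatement of $M$ in \S\ref{sec:good} show, versus the typo in \eqref{eq:otherops}); since the discrepancy cancels between your two displays, the identity \eqref{eq:invsymm} is unaffected.
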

This follows directly from Lemma \ref{lem:jared}. In lieu of Theorem \ref{th:main}, the appearance of $-e^{-\pi |\lambda|}$ is understood as coming from a branch cut of the hyperbolic plane waves \eqref{eq:hypplane2} when we extend these functions to the exterior of the unit ball. \par The next corollary is a Schwinger-type formula for the Poisson operator of $\widehat{H}_{\hbar}$ (in the same spirit as \cite{S64} for the Green function of $\widehat{H}_{\hbar}$ on the negative, discrete spectrum).
\begin{cor}\label{cor:formula} For $f \in L^2(\mathbb{S}^{d-1})$, 
\begin{equation*}
P_{\widehat{H}_{\hbar}}(\hbar,\lambda)[f]=\mathcal{F}_{\hbar}^{-1}\circ \widehat{\mathcal{D}}_{\hbar \lambda}\int_{\mathbb{S}^{d-1}}\frac{2^{\frac{d+1}{2}}}{|\bullet-\theta|^{d-1-2\lambda i}(|\bullet|^2-1)^{^{1+\lambda i}}} f(\theta)d\theta,
\end{equation*}
where the inverse semiclassical Fourier transform is suitably interpreted (see Lemma \ref{lem:jared}).
\end{cor}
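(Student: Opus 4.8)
The plan is to invert the commutative triangle \eqref{eq:diagram}. By Theorem \ref{th:main} the operator $\mathcal{V}_{\hbar,\lambda}$ is unitary, hence bijective, so commutativity of \eqref{eq:diagram} is equivalent to $P_{\widehat{H}_{\hbar}}(\hbar,\lambda)=\mathcal{V}_{\hbar,\lambda}^{-1}\circ P_{\mathbb{H}^d}(\lambda)$; thus it suffices to compute $\mathcal{V}_{\hbar,\lambda}^{-1}$ and apply it to $P_{\mathbb{H}^d}(\lambda)[f]=\int_{\mathbb{S}^{d-1}}e_{\lambda}(\bullet;\theta)f(\theta)\,d\theta$. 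Reading \eqref{eq:mainmap} backwards and inverting each factor, $\mathcal{V}_{\hbar,\lambda}^{-1}=\mathcal{F}_{\hbar}^{-1}\circ\widehat{\mathcal{D}}_{\hbar\lambda}\circ E\circ M^{-1}\circ\mathcal{I}$, where: $\mathcal{I}^{-1}=\mathcal{I}$ because $x\mapsto x/|x|^2$ is an involution; $M^{-1}$ is, by \eqref{eq:otherops}, multiplication by $\big|\tfrac{|\bullet|-1}{2}\big|^{-\frac{d+1}{2}}$; $\widehat{\mathcal{D}}_{\hbar\lambda}\coloneqq\widehat{\mathcal{D}}_{\frac{1}{\hbar\lambda}}^{-1}$ is the dilation inverse to the one in \eqref{eq:mainops}; and $E$ denotes the one-sided inverse of the restriction $R_{\{|\bullet|>1\}}$, i.e. the extension of a function from $\{|\bullet|>1\}$ to $\mathbb{R}^d$. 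As explained below, $E$ is exactly what is hidden in the phrase ``suitably interpreted''.

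The main computation is to push $M^{-1}\circ\mathcal{I}$ through the integral defining $P_{\mathbb{H}^d}(\lambda)[f]$ and evaluate it on one hyperbolic plane wave. Since $\mathcal{I}[e_{\lambda}(\bullet;\theta)](\xi)=e_{\lambda}(\xi/|\xi|^2;\theta)$, I would insert the conformal inversion identities $1-\big|\tfrac{\xi}{|\xi|^2}\big|^2=\tfrac{|\xi|^2-1}{|\xi|^2}$ and $\big|\tfrac{\xi}{|\xi|^2}-\theta\big|^2=\tfrac{|\xi-\theta|^2}{|\xi|^2}$ (valid for $|\xi|>1$, $|\theta|=1$) together with the explicit formula \eqref{eq:hypplane2} for $e_{\lambda}$; this re-expresses the plane wave in terms of $|\xi-\theta|$ and $|\xi|^2-1$ only. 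Multiplying by the scalar $\big(\tfrac{|\xi|-1}{2}\big)^{-\frac{d+1}{2}}=2^{\frac{d+1}{2}}(|\xi|-1)^{-\frac{d+1}{2}}$ coming from $M^{-1}$ and collecting the powers of $|\xi|-1$ and $|\xi|+1$ via $|\xi|^2-1=(|\xi|-1)(|\xi|+1)$ should collapse the expression to $\dfrac{2^{\frac{d+1}{2}}}{|\xi-\theta|^{d-1-2\lambda i}(|\xi|^2-1)^{1+\lambda i}}$ on $\{|\bullet|>1\}$. This step is routine but demands care with the normalizing constant in \eqref{eq:hypplane2} and with the branches of the complex powers; it is the place where the precise definition \eqref{eq:hypplane2} is needed.

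It remains to say what $E$, and hence the ``suitably interpreted'' inverse semiclassical Fourier transform, means, and this is where I expect the only real difficulty. The point is that $R_{\{|\bullet|>1\}}$ is injective on the image of $\widehat{\mathcal{D}}_{\frac{1}{\hbar\lambda}}\circ\mathcal{F}_{\hbar}$ applied to $\mathscr{H}_{\widehat{H}_{\hbar}}(\hbar,\lambda)$: by Lemma \ref{lem:jared} (equivalently Corollary \ref{cor:sym}) every such transform obeys the inversion symmetry \eqref{eq:invsymm}, so its values on $\{0<|\bullet|<1\}$ are forced by its values on $\{|\bullet|>1\}$, with the branch-cut factor $-e^{-\pi|\lambda|}$ of the remark after Corollary \ref{cor:sym}. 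Hence $E$ is the unique extension compatible with \eqref{eq:invsymm}, and applying $\mathcal{F}_{\hbar}^{-1}\circ\widehat{\mathcal{D}}_{\hbar\lambda}$ to the displayed kernel is to be read as: first extend the kernel across $|\bullet|=1$ by \eqref{eq:invsymm} (equivalently, continue $(|\bullet|^2-1)^{1+\lambda i}$ through the sphere with the branch producing $-e^{-\pi|\lambda|}$), then rescale, then Fourier-invert in the tempered-distributional sense furnished by Lemma \ref{lem:jared}. Substituting the kernel computed in the previous paragraph into $\mathcal{V}_{\hbar,\lambda}^{-1}=\mathcal{F}_{\hbar}^{-1}\circ\widehat{\mathcal{D}}_{\hbar\lambda}\circ E\circ M^{-1}\circ\mathcal{I}$ then yields the asserted Schwinger-type formula. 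The genuine obstacle is thus the rigorous inversion of $R_{\{|\bullet|>1\}}$ and the distributional meaning of $\mathcal{F}_{\hbar}^{-1}$ on a kernel a priori defined only outside the unit ball — precisely the content of Lemma \ref{lem:jared} — while everything else is substitution and simplification.
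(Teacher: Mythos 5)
Your route is essentially the paper's: the corollary is obtained by writing $P_{\widehat{H}_{\hbar}}(\hbar,\lambda)=\mathcal{V}_{\hbar,\lambda}^{-1}\circ P_{\mathbb{H}^d}(\lambda)$ and using the explicit inverse $\mathcal{V}_{\hbar,\lambda}^{-1}=\mathcal{F}_{\hbar}^{-1}\circ\widehat{\mathcal{D}}_{\hbar\lambda}\circ E_{\lambda}\circ M^{-1}\circ\mathcal{I}$ constructed in Step 3 of the proof of Theorem \ref{th:main}, with the ``suitable interpretation'' supplied by the regularization and uniqueness statement of Lemma \ref{lem:jared}. Your identification of the extension operator with the inversion symmetry \eqref{eq:invsymm} is a correct and slightly more explicit gloss on the paper's $E_{\lambda}$ (though note that \eqref{eq:invsymm} alone only forces the values on $\{0<|\bullet|<1\}$; the distributional structure \emph{on} the unit sphere and at $\theta$ still requires the $-\operatorname{sgn}(\lambda)\,0i$ branch prescription and the $H^{-\frac{d}{2}-0}$ uniqueness of Lemma \ref{lem:jared}, which you correctly defer to).

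One concrete arithmetic point does not close as you state it. With $M^{-1}$ taken to be multiplication by $\bigl|\tfrac{|\bullet|-1}{2}\bigr|^{-\frac{d+1}{2}}$, as in \eqref{eq:otherops}, one computes for $|\xi|>1$
\begin{equation*}
\Bigl(\tfrac{|\xi|-1}{2}\Bigr)^{-\frac{d+1}{2}}\Bigl(\tfrac{|\xi|^2-1}{|\xi-\theta|^2}\Bigr)^{\frac{d-1}{2}-\lambda i}
=\frac{2^{\frac{d+1}{2}}\,(|\xi|+1)^{\frac{d+1}{2}}}{(|\xi|^2-1)^{1+\lambda i}\,|\xi-\theta|^{d-1-2\lambda i}},
\end{equation*}
which carries a stray factor $(|\xi|+1)^{\frac{d+1}{2}}$; no regrouping of $(|\xi|-1)$ and $(|\xi|+1)$ removes it. The expression collapses to the asserted kernel only with the variant $M[f]=\bigl|\tfrac{|\bullet|^2-1}{2}\bigr|^{\frac{d+1}{2}}f$ used in Step 1 of \S\ref{sec:good} (the two definitions of $M$ in the paper disagree, and the computation, together with Lemma \ref{lem:jared}, forces the $|\bullet|^2-1$ version). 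With that correction the rest of your substitution goes through as described.
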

Again, this follows directly from Theorem \ref{th:main}.
\par The next corollary defines the Coulomb `regular' representation and proves it is irreducible, explaining the $\operatorname{SO}_{e}(1,d)$-symmetry of the positive energy spectrum of the Coulomb operator.
\begin{cor}\label{cor:group}
The infinite dimensional unitary representation
$\rho_{\widehat{H}_{\hbar}}(\hbar,\lambda):\operatorname{SO}_{e}(1,d)
\to \operatorname{U}(\mathscr{H}_{\widehat{H}_{\hbar}}(\hbar,\lambda))$ of the restricted Lorentz group $\operatorname{SO}_{e}(1,d)$ given
by $$\rho_{\widehat{H}_{\hbar}}(\hbar,\lambda) \coloneqq
\mathcal{V}_{\hbar,\lambda}^{-1}\circ
\rho_{\mathbb{H}^d}(\lambda) \circ \mathcal{V}_{\hbar,\lambda},$$
where $\rho_{\mathbb{H}^d}(\lambda)[A](f)\coloneqq f(A^{-1}\bullet)$, is irreducible.
\end{cor}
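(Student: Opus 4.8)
The plan is to reduce the statement entirely to the corresponding fact on hyperbolic space via Theorem \ref{th:main}. Since $\mathcal{V}_{\hbar,\lambda}$ is a unitary isomorphism $\mathscr{H}_{\widehat{H}_{\hbar}}(\hbar,\lambda)\to\mathscr{H}_{\mathbb{H}^d}(\lambda)$ and $\rho_{\widehat{H}_{\hbar}}(\hbar,\lambda)$ is defined by conjugating $\rho_{\mathbb{H}^d}(\lambda)$ by $\mathcal{V}_{\hbar,\lambda}$, the two representations are unitarily equivalent; irreducibility is preserved under unitary equivalence, so it suffices to prove that $\rho_{\mathbb{H}^d}(\lambda)$ on $\mathscr{H}_{\mathbb{H}^d}(\lambda)$ is irreducible. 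First I would make precise that $\rho_{\mathbb{H}^d}(\lambda)$ really does act on $\mathscr{H}_{\mathbb{H}^d}(\lambda)$: the action $f\mapsto f(A^{-1}\bullet)$ on eigenfunctions of $-\Delta_{\mathbb{H}^d}$ preserves the eigenvalue since the Laplace–Beltrami operator is $\operatorname{SO}_e(1,d)$-invariant, and it preserves the subspace of functions with $L^2$ boundary data because the boundary action of $\operatorname{SO}_e(1,d)$ on $\mathbb{S}^{d-1}$ by conformal (Möbius) transformations pulls back $L^2(\mathbb{S}^{d-1})$ to itself (after incorporating the conformal-weight Jacobian factor that appears in the Poisson kernel $e_\lambda(u;\theta)$). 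Concretely, on boundary data this is the principal-series action $(\pi_\lambda(A)f)(\theta) = |A'(\theta)|^{\frac{d-1}{2}-\lambda i} f(A^{-1}\theta)$, which is unitary on $L^2(\mathbb{S}^{d-1})$, and $P_{\mathbb{H}^d}(\lambda)$ intertwines $\pi_\lambda$ with $\rho_{\mathbb{H}^d}(\lambda)$.

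Having identified $\rho_{\mathbb{H}^d}(\lambda)$ (transported through the Poisson operator, which is a unitary onto $\mathscr{H}_{\mathbb{H}^d}(\lambda)$ by \eqref{eq:Hilbert}) with the spherical principal series representation $\pi_\lambda$ of $\operatorname{SO}_e(1,d)$ on $L^2(\mathbb{S}^{d-1})$, I would invoke the classical fact that for $\lambda\in\mathbb{R}_{\neq0}$ the (unitary) spherical principal series of $\operatorname{SO}_e(1,d)$ is irreducible — this is standard, e.g. via Bruhat's criterion or the explicit analysis of the intertwining (Knapp–Stein) operators, which for $\lambda\neq0$ are isomorphisms with no invariant subspaces; reducibility occurs only at the exceptional points $\lambda=0$ (and the complementary/exceptional series, which do not arise here). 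For completeness I would cite a standard reference (Knapp's \emph{Representation Theory of Semisimple Groups}, or Helgason) rather than reprove it. The one genuine subtlety to nail down is the compatibility of the two descriptions of the action, i.e. that the geometric action $f\mapsto f(A^{-1}\bullet)$ on $\mathscr{H}_{\mathbb{H}^d}(\lambda)\subset C^\infty(\mathbb{H}^d)$ corresponds under $P_{\mathbb{H}^d}(\lambda)^{-1}$ exactly to the weight-$(\tfrac{d-1}{2}-\lambda i)$ action on $L^2(\mathbb{S}^{d-1})$; this follows from the transformation law of the hyperbolic plane wave $e_\lambda(u;\theta)$ under isometries, namely $e_\lambda(Au;A\theta)=|A'(\theta)|^{-(\frac{d-1}{2}-\lambda i)}e_\lambda(u;\theta)$, which I would verify directly from the formula \eqref{eq:hypplane2} in a short lemma, changing variables in the defining integral for $F_{\mathbb{H}^d}^\lambda$.

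The main obstacle, then, is not the representation theory input (which is off-the-shelf) but the bookkeeping of conformal weights and Jacobians: one must check that $\mathscr{H}_{\mathbb{H}^d}(\lambda)$ is genuinely $\operatorname{SO}_e(1,d)$-invariant with the \emph{unitary} structure of \eqref{eq:Hilbert} preserved, which amounts to verifying that the conformal Jacobian factor carried by $e_\lambda$ is precisely the one making $\pi_\lambda$ unitary on $L^2(\mathbb{S}^{d-1})$ — i.e. that the $L^2$-norm $\lVert F_{\mathbb{H}^d}^\lambda\rVert = \lVert f\rVert_{L^2}$ is $\operatorname{SO}_e(1,d)$-invariant. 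Once that lemma is in place, the proof is: $\rho_{\mathbb{H}^d}(\lambda)\cong\pi_\lambda$ unitarily via $P_{\mathbb{H}^d}(\lambda)$; $\pi_\lambda$ is irreducible for $\lambda\neq0$ by the standard classification of the spherical principal series of $\operatorname{SO}_e(1,d)$; hence $\rho_{\mathbb{H}^d}(\lambda)$ is irreducible; and finally $\rho_{\widehat{H}_{\hbar}}(\hbar,\lambda) = \mathcal{V}_{\hbar,\lambda}^{-1}\rho_{\mathbb{H}^d}(\lambda)\mathcal{V}_{\hbar,\lambda}$ is unitarily equivalent to it by Theorem \ref{th:main}, so it is irreducible as well. $\qed$
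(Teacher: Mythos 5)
Your proposal is correct and follows essentially the same route as the paper: reduce by the unitary equivalence $\mathcal{V}_{\hbar,\lambda}$ from Theorem \ref{th:main} to the irreducibility of $\rho_{\mathbb{H}^d}(\lambda)$, which the paper simply cites from Helgason \cite[Chapter III, Proposition 5.3]{H70} (where the identification with the spherical principal series and the conformal-weight bookkeeping you spell out are carried out). The extra detail you supply is sound but is exactly what the cited reference covers.
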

This follows directly from Theorem \ref{th:main} and the fact that $\rho_{\mathbb{H}^d}(\lambda)$ is irreducible, proved in
\cite[Chapter III, Proposition 5.3]{H70}.
\subsection{Repulsive Coulomb Potential}
The same theorem and corollaries can be shown for the repulsive
Coulomb operator
\begin{equation}\label{eq:HamilRep}
\widehat{H}_{\hbar}^{-} \coloneqq
-\frac{\hbar^2}{2}\Delta+\frac{1}{|x|}
\end{equation}
by constructing similar
perturbed plane waves to $\psi_{\hbar,\lambda}(\bullet ; \bullet)$ (see \eqref{eq:future} for more details). If we define the Hilbert
space $\mathscr{H}_{\widehat{H}_{\hbar}^{-}}(\hbar,\lambda)$ and the Poisson operator
$P_{\widehat{H}_{\hbar}^{-}}(\hbar,\lambda)$ analogously to
\eqref{eq:Hilbert} and \eqref{eq:poissonops}, we have the
following theorem:
\begin{theo}\label{th:main2}The unitary
operator
$\mathcal{V}_{\hbar,\lambda}^{-}:\mathscr{H}_{\widehat{H}_{\hbar}^{-}}(\hbar,\lambda)\to
\mathscr{H}_{\mathbb{H}^d}(\lambda)$
given by
\begin{equation*}
\mathcal{V}_{\hbar,\lambda}^{-} \coloneqq M \circ R_{\{|\bullet|<1\}} \circ \widehat{\mathcal{D}}_{\frac{1}{\hbar\lambda}} \circ \mathcal{F}_{\hbar}
\end{equation*}
makes the following diagram commute:
\[\begin{tikzcd}[column sep=0.25em]
	& {L^2(\mathbb{S}^{d-1})} \\
	{\mathscr{H}_{\widehat{H}_{\hbar}^{-}}(\hbar,\lambda)} && {\mathscr{H}_{\mathbb{H}^d}(\lambda)}
	\arrow["{P_{\widehat{H}_{\hbar}^{-}}(\hbar,\lambda)}"', from=1-2, to=2-1]
	\arrow["{P_{\mathbb{H}^d}(\lambda)}", from=1-2, to=2-3]
	\arrow["{\mathcal{V}_{\hbar,\lambda}^{-}}"', from=2-1, to=2-3]
  \end{tikzcd}.\]
  \end{theo}
The proof of Theorem \ref{th:main} can be easily adapted to
prove this theorem, and we leave the details for the reader. With Theorem \ref{th:main2}, one can show
the analogous corollaries from the previous
section. With Theorems \ref{th:main} and \ref{th:main2}, we have the following corollary:
\begin{cor}
If we define the unitary operator $$\mathcal{T}_{\hbar,\lambda} \coloneqq
(\mathcal{V}_{\hbar,\lambda}^{-})^{*}\circ
\mathcal{V}_{\hbar,\lambda}=\mathcal{F}_{\hbar}^{-1}\circ
\widehat{\mathcal{D}}_{\hbar\lambda}\circ \mathsf{M} \circ \mathcal{I} \circ
\widehat{\mathcal{D}}_{\frac{1}{\hbar\lambda}}\circ \mathcal{F}_{\hbar},$$
where $\mathsf{M}[f] \coloneqq \frac{1}{|\bullet|^{d+1}}f$, then we
have the following commutative diagram:
\[\begin{tikzcd}[column sep=0.25em]
	& {L^2(\mathbb{S}^{d-1})} \\
	{\mathscr{H}_{\widehat{H}_{\hbar}}(\hbar,\lambda)} && 	{\mathscr{H}_{\widehat{H}_{\hbar}^{-}}(\hbar,\lambda)} 
	\arrow["{P_{\widehat{H}_{\hbar}}(\hbar,\lambda)}"', from=1-2, to=2-1]
	\arrow["{P_{\widehat{H}_{\hbar}^{-}}(\hbar,\lambda)}", from=1-2, to=2-3]
	\arrow["{\mathcal{T}_{\hbar,\lambda}}"', from=2-1, to=2-3]
  \end{tikzcd}.\]
\end{cor}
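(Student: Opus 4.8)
The plan is to derive this corollary formally from Theorems \ref{th:main} and \ref{th:main2}, so that essentially nothing new has to be proved. First, since $\mathcal{V}_{\hbar,\lambda}^{-}\colon\mathscr{H}_{\widehat{H}_{\hbar}^{-}}(\hbar,\lambda)\to\mathscr{H}_{\mathbb{H}^d}(\lambda)$ is unitary by Theorem \ref{th:main2}, its adjoint coincides with its inverse, $(\mathcal{V}_{\hbar,\lambda}^{-})^{*}=(\mathcal{V}_{\hbar,\lambda}^{-})^{-1}$, and composing with the unitary $\mathcal{V}_{\hbar,\lambda}$ of Theorem \ref{th:main} shows immediately that $\mathcal{T}_{\hbar,\lambda}=(\mathcal{V}_{\hbar,\lambda}^{-})^{*}\circ\mathcal{V}_{\hbar,\lambda}$ is a unitary isomorphism from $\mathscr{H}_{\widehat{H}_{\hbar}}(\hbar,\lambda)$ onto $\mathscr{H}_{\widehat{H}_{\hbar}^{-}}(\hbar,\lambda)$.

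For the commutativity I would superimpose the two triangles of Theorems \ref{th:main} and \ref{th:main2} along their common vertex $\mathscr{H}_{\mathbb{H}^d}(\lambda)$: Theorem \ref{th:main} gives $\mathcal{V}_{\hbar,\lambda}\circ P_{\widehat{H}_{\hbar}}(\hbar,\lambda)=P_{\mathbb{H}^d}(\lambda)$, while Theorem \ref{th:main2} gives $P_{\widehat{H}_{\hbar}^{-}}(\hbar,\lambda)=(\mathcal{V}_{\hbar,\lambda}^{-})^{-1}\circ P_{\mathbb{H}^d}(\lambda)$. Substituting the first into the second yields
\[
\mathcal{T}_{\hbar,\lambda}\circ P_{\widehat{H}_{\hbar}}(\hbar,\lambda)=(\mathcal{V}_{\hbar,\lambda}^{-})^{-1}\circ\mathcal{V}_{\hbar,\lambda}\circ P_{\widehat{H}_{\hbar}}(\hbar,\lambda)=(\mathcal{V}_{\hbar,\lambda}^{-})^{-1}\circ P_{\mathbb{H}^d}(\lambda)=P_{\widehat{H}_{\hbar}^{-}}(\hbar,\lambda),
\]
which is exactly the asserted commutativity.

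It then remains only to unwind the definitions to get the closed form of $\mathcal{T}_{\hbar,\lambda}$. Using \eqref{eq:mainmap}, the definition of $\mathcal{V}_{\hbar,\lambda}^{-}$, and that $\mathcal{F}_{\hbar}$ is unitary on $L^2(\mathbb{R}^d)$ while $\widehat{\mathcal{D}}_{\frac{1}{\hbar\lambda}}$ is unitary with $\widehat{\mathcal{D}}_{\frac{1}{\hbar\lambda}}^{-1}=\widehat{\mathcal{D}}_{\hbar\lambda}$, I would write
\[
(\mathcal{V}_{\hbar,\lambda}^{-})^{-1}=\mathcal{F}_{\hbar}^{-1}\circ\widehat{\mathcal{D}}_{\hbar\lambda}\circ R_{\{|\bullet|<1\}}^{-1}\circ M^{-1},
\]
where $M^{-1}$ is multiplication by $|\tfrac{|\bullet|-1}{2}|^{-\frac{d+1}{2}}$ and $R_{\{|\bullet|<1\}}^{-1}$ is extension by zero; composing and cancelling the $\widehat{\mathcal{D}}_{\frac{1}{\hbar\lambda}}\circ\mathcal{F}_{\hbar}$ factors leaves
\[
\mathcal{T}_{\hbar,\lambda}=\mathcal{F}_{\hbar}^{-1}\circ\widehat{\mathcal{D}}_{\hbar\lambda}\circ\big(R_{\{|\bullet|<1\}}^{-1}\circ M^{-1}\circ\mathcal{I}\circ M\circ R_{\{|\bullet|>1\}}\big)\circ\widehat{\mathcal{D}}_{\frac{1}{\hbar\lambda}}\circ\mathcal{F}_{\hbar}.
\]
The proof finishes by checking that the middle bracket acts as $\mathsf{M}\circ\mathcal{I}$, which is a direct computation from $\mathcal{I}^{2}=\mathrm{id}$, the homogeneity of the weights under $x\mapsto x/|x|^{2}$, and the fact—implicit in the proofs of Theorems \ref{th:main} and \ref{th:main2}—that $\widehat{\mathcal{D}}_{\frac{1}{\hbar\lambda}}\circ\mathcal{F}_{\hbar}$ carries $\mathscr{H}_{\widehat{H}_{\hbar}}(\hbar,\lambda)$ into functions supported in $\{|\bullet|>1\}$ and $\mathscr{H}_{\widehat{H}_{\hbar}^{-}}(\hbar,\lambda)$ into functions supported in $\{|\bullet|<1\}$, so that the two restriction operators are invertible on the relevant subspaces. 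This last point—keeping track of the supports and of the conformal weights picked up under the inversion $\mathcal{I}$—is the only step that needs any care; everything else is formal.
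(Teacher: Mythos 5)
Your diagram chase is correct and is exactly how the paper obtains the commutativity: composing the triangle of Theorem \ref{th:main} with the inverted triangle of Theorem \ref{th:main2} gives $\mathcal{T}_{\hbar,\lambda}\circ P_{\widehat{H}_{\hbar}}(\hbar,\lambda)=(\mathcal{V}_{\hbar,\lambda}^{-})^{-1}\circ P_{\mathbb{H}^d}(\lambda)=P_{\widehat{H}_{\hbar}^{-}}(\hbar,\lambda)$, and unitarity of $\mathcal{T}_{\hbar,\lambda}$ is immediate from unitarity of the two factors. The weight computation in your middle bracket is also right: since the multiplier $w(\xi)=|\tfrac{|\xi|^2-1}{2}|^{\frac{d+1}{2}}$ satisfies $w(\xi/|\xi|^2)=|\xi|^{-(d+1)}w(\xi)$, one gets $M^{-1}\circ\mathcal{I}\circ M=\mathsf{M}\circ\mathcal{I}$ on functions.

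The genuine gap is in your support claim. It is false that $\widehat{\mathcal{D}}_{\frac{1}{\hbar\lambda}}\circ\mathcal{F}_{\hbar}$ carries $\mathscr{H}_{\widehat{H}_{\hbar}}(\hbar,\lambda)$ into distributions supported in $\{|\xi|>1\}$, and consequently the inverse of $R_{\{|\bullet|<1\}}$ (resp.\ $R_{\{|\bullet|>1\}}$) on the relevant Hilbert space is \emph{not} extension by zero. Corollary \ref{cor:sym} says precisely the opposite: by \eqref{eq:invsymm}, the value of $(\widehat{\mathcal{D}}_{\frac{1}{\hbar\lambda}}\circ\mathcal{F}_{\hbar})[\psi]$ on $0<|\xi|<1$ is $-e^{-\pi|\lambda|}|\xi|^{-(d+1)}$ times its value at $\xi/|\xi|^2$, so if the distribution vanished inside the ball it would vanish identically. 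This is also why Step 3 of the proof of Theorem \ref{th:main} inverts the restriction with the nontrivial extension operator $E_{\lambda}$, which re-attaches the interior piece and the singular structure on $\mathbb{S}^{d-1}$ (including the delta-type singularity at $\theta_0$) prescribed by Lemma \ref{lem:jared}, rather than by zero. To justify the closed form $\mathcal{T}_{\hbar,\lambda}=\mathcal{F}_{\hbar}^{-1}\circ\widehat{\mathcal{D}}_{\hbar\lambda}\circ\mathsf{M}\circ\mathcal{I}\circ\widehat{\mathcal{D}}_{\frac{1}{\hbar\lambda}}\circ\mathcal{F}_{\hbar}$ you must therefore show that $\mathsf{M}\circ\mathcal{I}$ applied to the \emph{full} distribution $(\widehat{\mathcal{D}}_{\frac{1}{\hbar\lambda}}\circ\mathcal{F}_{\hbar})[\psi]$ reproduces the canonical extension $E_{\lambda}^{-}$ of its interior restriction, i.e.\ the full distribution attached to $\mathscr{H}_{\widehat{H}_{\hbar}^{-}}(\hbar,\lambda)$; this is exactly where the inversion symmetry \eqref{eq:invsymm} and its repulsive analogue (with the branch-cut factor $-e^{-\pi|\lambda|}$ coming from \eqref{eq:future}) must be invoked, and it is the one step your argument leaves unaddressed.
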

This corollary reflects the inversion phenomenon in the
classical setting described in \S \ref{sec:moser}.
\subsection{Prior Work}\label{sec:prior}
\par The attractive Coulomb operator corresponds to the
classical phase space Hamiltonian
\begin{equation*}
H^+:T^*(\mathbb{R}^d\setminus\{0\}) \to \mathbb{R},\quad H^+(x,\xi)\coloneqq \frac{|\xi|^2}{2}-\frac{1}{|x|}
\end{equation*}
called the Kepler Hamiltonian, where we identify
$T^*(\mathbb{R}^d\setminus\{0\})=T(\mathbb{R}^d\setminus\{0\})=\mathbb{R}^d\setminus\{0\}
\times \mathbb{R}^d$ using the Riemannian metric on
$\mathbb{R}^d\setminus\{0\}$. For a fixed energy $E$, the
Hamiltonian orbits, also called Kepler orbits, lie on the energy
hypersurface
\begin{equation*}
\Sigma_E^+ \coloneqq \{(x,\xi) \in T^*(\mathbb{R}^d \setminus \{0\}) \mid H^+(x,\xi)=E\}.
\end{equation*}
For any energy $E \in \mathbb{R}$, $\Sigma_E^+$ is \textit{not} compact due to the $|x| \to 0, |\xi| \to \infty$ regime.
For $E<0$, the orbits consist of two types: bounded periodic orbits
whose configuration space projections are planar ellipses, and
unbounded ``collision'' orbits whose configuration space projections
are line segments terminating at the origin in finite
time. The configuration space projections of the periodic
Kepler orbits follow Kepler's laws of planetary motion (with one
body fixed and all physical constants fixed to $1$). Namely, the periodic configuration space trajectories
\begin{itemize}
\item are planar ellipses with the origin fixed at one focus (in the Keplerian view, the origin represents the Sun),
\item are such that the line segment connecting the trajectory
to the origin sweeps out equal areas during equal time
intervals,
\item have period $T$ related to the energy $E$ by the
formula
\begin{equation}\label{eq:kepler3}
T=\frac{2\pi}{p_0^3},\quad p_0 \coloneqq \sqrt{-2E},
\end{equation}
\end{itemize}
where we have used our convention on physical constants. Observe that Kepler's third
law is popularly stated with the length of the semi-major axis
$a$, but with our conventions $a=p_0^{-2}$ (see
\cite[(5)]{M83} and the very nice expository article \cite{vHH09}). \par This Hamiltonian system is not only completely
integrable, but it is maximally superintegrable with $2d-1$
independent integrals of motion coming from the components of the conserved quantities of the Hamiltonian $H^+$, the angular momentum
$(d-2)$-vector ${\bf L}$, and the
Runge-Lenz eccentricity vector $R^+$ defined by
\begin{equation}\label{eq:ihatejared}
{\bf L}(x,\xi) \coloneqq \star(x \wedge \xi),\qquad R^+(x,\xi) \coloneqq \Big( |\xi|^2-\frac{1}{|x|}\Big)x-(x\cdot \xi)\xi,
\end{equation}
where $\star$ denotes the Hodge star operator. The magnitudes of
these quantities are related by the formula
$$|R^+|^2=1+2E|{\bf L}|^2.$$
A Kepler orbit is a collision orbit if and only if ${\bf L}=0$. Provided that ${\bf L } \neq 0$, in configuration space, ${\bf L}$ determines the plane of motion, $|R^+|$ is the eccentricity of the conic section, the direction of $R^+$ is, up to a $90^{\circ}$-rotation, the direction of the other focus, and $|2E|^{-2}$ is the length of the semi-major axis (as noted previously). The Runge-Lenz vector $R^+$ has a long, complicated history of discovery and rediscovery (see the works of Goldstein \cite{G75,G76}), but, most noteworthy, Hamilton in \cite{H47} showed that the Runge-Lenz vector can be understood as coming from the geometry of the momentum space projections of the Kepler orbits, which miraculously happen to be \textit{circles}, each with center $R^+/|{\bf L}|$ and radius $1/|{\bf L}|$ (more carefully, these circles degenerate into lines for the collision orbits). The superintegrability explains why the bounded orbits are not merely quasi-periodic and confined to invariant tori as guaranteed from the Liouville-Arnold theorem (see \cite[Chapter 10]{A89}), but the bound orbits are genuinely
\textit{periodic} (see \cite{GS90} for more on the symmetries of this problem). \par Because of the collision orbits, the
Hamiltonian flow of $H^+$ is \textit{not} complete. In
\cite{M70}, Moser compactified $\Sigma_E^+$ for $E<0$ and
regularized this flow with a reflection condition. That is, the
collision orbits are reflected back on the same line with the
same incoming speed when they hit the origin, resembling a
degenerate ellipse. This completes the Hamiltonian flow for
$E<0$ and extends the collision orbits past their finite
collision time to all time. He accomplished this by constructing a
energy-dependent symplectomorphism from this compactified energy hypersurface to $T_1^*\mathbb{S}^d$ that maps the regularized
Hamiltonian flow of $H^+$ to the cogeodesic flow of
$T_1^*\mathbb{S}^d$, up to a reparametrization of
time. Furthermore, the map pulls back the $\binom{d+1}{2}$
components of the angular momentum on $\mathbb{S}^d$ to the
$\binom{d}{2}$ components of {\bf L} on $\mathbb{R}^d$
\textit{along with} the $d$ (scaled) components of $R^+$ (see
\cite[Theorem 4.1]{HdL12}).
\par The `quantization' of Moser's map was defined 35 years
\textit{earlier} in \cite{F35}. That is, Fock constructed a
$\hbar,N$-dependent unitary map that sends the discrete spectrum
eigenfunctions of $\widehat{H}_{\hbar}$ with eigenvalue
$E=-\frac{1}{2\hbar^2(N+1)^2},N=0,1,\ldots$ to the
eigenfunctions of $-\Delta_{\mathbb{S}^d}$ with eigenvalue
$N(N+d-1)$. This map explains why the dimensions of these
eigenspaces coincide and furthermore completely relates the
discrete spectra of these operators. Recently in \cite{L23}, this map was used to completely characterize the set of semiclassical measures corresponding to sequences of eigenfunctions of $\widehat{H}_{\hbar}$.
\par For the $E>0$ regime, the classical and quantum mechanics were treated by later authors. Kepler's first two laws continue to
hold where the planar ellipses are replaced by planar
hyperbolas, and Belbruno and Osipov showed Moser's reflection
condition continues the regularize the positive energy
Hamiltonian flow (see \cite{B77,O77} and further references in
\cite{M83}). The codomain of Moser's map is replaced with the unit cotangent bundle of two copies of hyperbolic space, and we give a presentation in this work in \S
\ref{sec:moser}.
\par The quantum operator relating the absolutely continuous spectra of
$-\Delta_{\mathbb{H}^d}$ and $\widehat{H}_{\hbar}$ was sketched in \cite{F35}, and the forward map was more fleshed out in the
physics papers \cite{BI66,PP66}. Unfortunately, these papers do not specify the Hilbert spaces involved in the domain of the operator, and they also don't
adequately address the inverse map, which requires careful
consideration of intersecting singularities (see Lemma \ref{lem:jared} and the following remark). These papers also use deep and
nontrivial results of infinite-dimensional unitary
representation theory of $\operatorname{SO}_e(1,d)$, including lengthy special function calculations
involving some divergent integrals that don't have an obvious,
rigorous distributional meaning.
\subsection{Current Work}
The main result of this paper, Theorem \ref{th:main},
rigorously defines the `quantization' of Moser's positive energy
map with well-defined Hilbert spaces. The proof avoids
infinite-dimensional unitary representation theory and only uses
basic definitions and Euler integral formulas for hypergeometric
functions. We carefully define the inverse map, and we explain
the appearance of the fundamental constants like that of
\eqref{eq:invsymm} with the spectral theory of hyperbolic
space. Our approach uniformly handles all dimensions $d \geq 2$,
unlike the methods in \cite{BI66}, which necessitate separate
analyses based on whether $d$ is even or odd.
\par Considering the observations from the previous section, Corollaries \ref{cor:intertwine} and \ref{cor:sym} were established with a certain level of rigor in \cite{BI66,PP66}. To the author's knowledge, Corollary \ref{cor:scat} hasn't been formally stated in the literature, but it follows readily from \cite[\S 4.2]{Y97}. Corollary \ref{cor:group} was certainly known and used by physicists, but again, to the author's knowledge, hasn't been stated nor proved rigorously. Finally, Corollary \ref{cor:formula} appears to be an original formula.
\par In the appendix of this paper, we verify Theorem \ref{th:main} when $f \in
L^2(\mathbb{S}^{d-1})$ is a spherical harmonic. To some extent,
this was done in the appendix of \cite{BI66}. Again, the
computations involved diverging contour integrals and
regularizations that appear without justification. Here, we give an alternative, 
rigorous approach using the special function theory of the
Bessel $J$ and Legendre $P$ functions. It is worth noting that
the results given in this section are strictly \textit{independent} of
our proof of Theorem \ref{th:main}. One may even be able to use
these results together with a density argument to show an alternative
proof of Theorem \ref{th:main}. We refrain from doing so due to
the reliance of nontrivial special function theory.
\subsection{Future Work}
In future work, we plan to study various extensions of Theorem \ref{th:main}. These include extending $L^2(\mathbb{S}^{d-1})$ to $\mathscr{A}'(\mathbb{S}^{d-1})$, analytic functionals (or hyperfunctions) on the sphere (see \cite[\S 5-\S 6]{H74}), and extending $\lambda \in \mathbb{C}$. It is also of interest to try to understand the distributions in Lemma \ref{lem:jared} in the framework of paired or nested Lagrangian distributions (see \cite[\S 5]{dHUV15} and \cite[\S 6.1]{GW23}).
\subsection{Acknowledgments}
This article is part of the Ph.D. thesis of the author at Northwestern University under the guidance of Jared Wunsch and Steve Zelditch. The author thanks Jared Wunsch for advising and support after the passing of Steve Zelditch. The author also thanks email correspondences with Michael E. Taylor and conversations with Xiumin Du and Andr\'as Vasy.
\section{Background}\label{sec:back}

\subsection{Spectral and scattering theory of
  $\widehat{H}_{\hbar}$}
The spectral theory for $\widehat{H}_{\hbar}$ is well-known (see
\cite[Chapter 8, \S 7]{T11part2} for an overview and the informal notes \cite{T22} for interesting conjectures). The spectrum consists of two
disjoint parts: the discrete spectrum lying in the negative real
numbers and the continuous spectrum of $[0,\infty)$. That is,
$$\operatorname{spec}\widehat{H}_{\hbar}=\Big\{
-\frac{1}{2\hbar^2(N+1)^2}:N=0,1,\ldots\Big\}\sqcup \{0\}\sqcup
\Big\{
\frac{1}{2\hbar^2\lambda^2}:\lambda \in (0,\infty)\Big\}. $$
This paper concerns the strictly positive spectrum; that is,
solutions $\psi$ of the equation
\begin{equation}\label{eq:BIG}
\widehat{H}_{\hbar}\psi =E_{\lambda}(\hbar)\psi
\end{equation}
where $E_{\lambda}(\hbar)
\coloneqq\frac{1}{2\hbar^2\lambda^2}$. Writing
\begin{align}\widehat{H}_{\hbar}&=-\frac{\hbar^2}{2}\Big(\partial_r^2+\frac{d-1}{r}\partial_r+\frac{1}{r^2}\Delta_{\mathbb{S}^{d-1}}\Big)-\frac{1}{r}\label{eq:polarH}     
\end{align}
and
separating variables, we obtain explicit generalized
eigenfunctions
\begin{equation}\label{eq:geneig}
\widehat{H}_{\hbar}\psi_{\hbar,\lambda, \ell }^{\boldsymbol{m}}=E_{\lambda}(\hbar)\psi_{\hbar,\lambda,\ell}^{\boldsymbol{m}}
\end{equation}
where \begin{align}
\psi_{\hbar,\lambda,\ell}^{\boldsymbol{m}}(r,\theta) \coloneqq &\
                                                         C_{\lambda,\ell}\Big( \frac{2}{\hbar^2 \lambda}ri\Big)^{\ell}e^{\frac{r}{\hbar^2\lambda}i}{\bf
                                                         M}\Big(\frac{d-1}{2}+\ell-\lambda
                                                         i;\
                                                         d-1+ 2\ell;\
                                                         -\frac{2}{\hbar^2 \lambda}ri
                                                         \Big)Y_{\ell}^{\boldsymbol{m}}(\theta)\label{eq:geneigfun} \\
                                                       =&\
                                                          \frac{C_{\lambda,\ell}}{\Gamma(d-1+2\ell)}\Big(\frac{2}{\hbar^2
                                                          \lambda}ri
                                           \Big)^{-\frac{d-1}{2}}M_{\lambda
                                                          i,
                                                          \frac{d}{2}+\ell-1}\Big(-\frac{2}{\hbar^2
                                                          \lambda}ri\Big)Y_{\ell}^{\boldsymbol{m}}(\theta)
                                                          \nonumber\\
        =&\ C_{\lambda,\ell}\frac{\Gamma(1-\frac{d-1}{2}-\ell+\lambda i )}{\Gamma(\frac{d-1}{2}+\ell-\lambda i)}\Big(\frac{2}{\hbar^2
                                                          \lambda}ri
                                           \Big)^{-\frac{d-1}{2}}L_{-\frac{d-1}{2}-\ell+\lambda i }^{(d-2+2\ell)}\Big(-\frac{2}{\hbar^2 \lambda}ri\Big)Y_{\ell}^{\boldsymbol{m}}(\theta)\nonumber
\end{align}
where $r \in [0,\infty), \theta \in \mathbb{S}^{d-1}$, $C_{\lambda,\ell}$ is a normalization constant to be determined later (see Proposition \ref{prop:main} in the appendix), ${\bf M}$ is Olver's confluent hypergeometric function, $M$ is Whittaker's function, $L$ is the analytically continued generalized Laguerre polynomials (see \cite{O74} for the definitions), and $Y_{\ell}^{\boldsymbol{m}}:\mathbb{S}^{d-1}\to \mathbb{C} $ are the spherical harmonics satisfying
$$-\Delta_{\mathbb{S}^{d-1}}Y_{\ell}^{\boldsymbol{m}}(\theta)=\ell(\ell+d-2)Y_{\ell}^{\boldsymbol{ m}}(\theta) $$
where $\boldsymbol{m}$ denotes the multi-index of  $d-2$
quantum numbers identifying each degenerate harmonic for each
$\ell$ (see \cite[\S 2.2]{Y97} for derivations of
\eqref{eq:geneig}). \par We can construct more generalized
eigenfunctions with scattering theory (see \cite{M95,H12} for 
general overviews of the time-independent and time-dependent theories, respectively, and \cite{J99,M92,V98} for smooth, long-range, time-independent
scattering and \cite[\S 9]{RS79} for time-dependent Coulomb scattering). Following \cite[\S 2.1]{Y97}, we seek to find the so-called
``perturbed plane waves'' which satisfy \eqref{eq:BIG} and are of the form
\begin{equation}\label{eq:ppw}
\psi_{\hbar,\lambda}(x; \theta_0)=c_{\hbar,\lambda} e^{i\frac{1}{\hbar^2\lambda}x\cdot \theta_0 }f(|x|-x\cdot \theta_0)
\end{equation}
for some function $f$ and fixed constants $c_{\hbar,\lambda}\in \mathbb{R}$ and $\theta_0 \in
\mathbb{S}^{d-1}$. Using the fact that
$$\Delta_{\mathbb{S}^{d-1}}\big[g( \theta_0\cdot \theta) \big]=-(d-1)( \theta_0\cdot  \theta)
g'( \theta_0\cdot  \theta )+(1-(\theta_0\cdot
\theta)^2)g''( \theta_0\cdot \theta) $$
for any $g \in C^2(\mathbb{R})$ and $\theta_0 \in \mathbb{S}^{d-1}$ fixed,
we see that $f$ from \eqref{eq:ppw} satisfies
$$2tf''(t)+\Big(d-1-\frac{2}{\hbar^2\lambda}ti\Big)f'(t)+\frac{2}{\hbar^2}f(t)=0. $$
This equation can be written as a special case of the confluent
hypergeometric equation, and we pick the regular solution
distinguished by $f(0)=\Gamma(\frac{d-1}{2})^{-1}$ and $f'(0)=-\frac{2}{\hbar^2(d-1)}$. So $f\equiv {\bf M}(\lambda i,\frac{d-1}{2},\frac{1}{\hbar^2\lambda}i\bullet )$ and \eqref{eq:ppw} is
\begin{equation}\label{eq:ppw2}
\psi_{\hbar,\lambda}(x;\theta_0)=c_{\hbar,\lambda}e^{i
  \frac{1}{\hbar^2\lambda}x\cdot\theta_0 }{\bf
  M}\Big(\lambda i;\ \frac{d-1}{2};\ \frac{1}{\hbar^2\lambda}(|x|-x\cdot \theta_0)i\Big)
\end{equation}
where $c_{\hbar,\lambda}\coloneqq
|\lambda|^{\frac{d}{2}-1}\hbar^{\frac{d}{2}}\sqrt{2\pi}$ (the
value of this constant comes lining up constants in Theorem
\ref{th:main}). \par Note that for the repulsive Coulomb operator \eqref{eq:HamilRep}, we can define the analogous perturbed plane wave by
\begin{equation}\label{eq:future}
\psi_{\hbar,\lambda}^{-}(x;\theta_0)=c_{\hbar,\lambda}e^{-\pi |\lambda|}e^{i
  \frac{1}{\hbar^2\lambda}x\cdot\theta_0 }{\bf
  M}\Big(-\lambda i;\ \frac{d-1}{2};\ \frac{1}{\hbar^2\lambda}(|x|-x\cdot \theta_0)i\Big),
\end{equation}
which is, up to constants, the same as \eqref{eq:ppw2} with a negation of the
first slot of the ${\bf M}$ function. Again, we leave the reader to verify Theorem \ref{th:main2} by adapting the proof of Theorem \ref{th:main}, which we give in \S\ref{sec:good}. \par Now since ${\bf M}$ is entire in all three slots, we have an Euler integral representation given by
\begin{align}
&\psi_{\hbar,\lambda}(x;\theta_0)=c_{\hbar,\lambda}e^{i
  \frac{1}{\hbar^2\lambda}x\cdot\theta_0 }\lim_{\varepsilon \to 0^+}{\bf
  M}\Big(\varepsilon+\lambda i;\ \frac{d-1}{2};\
                                  \frac{1}{\hbar^2\lambda}(|x|-x\cdot
                                  \theta_0)i\Big)\label{eq:BIG0} \\
                                &=\lim_{\varepsilon \to 0^+}\frac{c_{\hbar,\lambda}}{\Gamma(\varepsilon+\lambda i)\Gamma(\frac{d-1}{2}-\varepsilon-\lambda i)}\int_0^1e^{i\frac{t}{\hbar^2\lambda}|x|}e^{-i\frac{t-1}{\hbar^2\lambda} x \cdot \theta_0 }t^{\varepsilon+\lambda i-1}(1-t)^{\frac{d-1}{2}-\varepsilon-\lambda i-1}dt.\label{eq:BIG}
\end{align}
We can trivially bound $\psi_{\hbar,\lambda}(x;\theta_0)$ by a
linear function of $|x|$ with the triangle inequality. Indeed, we have
\begin{align}
&\Big|{\bf
  M}\Big(\varepsilon+\lambda i;\ \frac{d-1}{2};\
                                  \frac{1}{\hbar^2\lambda}(|x|-x\cdot
                                  \theta_0)i\Big)\Big| \leq |c_{\hbar,\lambda}|\bigg|\frac{1}{\Gamma(\varepsilon+\lambda
  i)\Gamma(\frac{d-1}{2}-\varepsilon-\lambda i)}\nonumber \\
                                                       &\qquad \bigg(\int_0^1(e^{i\frac{t}{\hbar^2\lambda}|x|}e^{-i\frac{t}{\hbar^2\lambda}
                 x \cdot
                                                      \theta_0
                                                      }-1)t^{\varepsilon+\lambda i-1}(1-t)^{\frac{d-1}{2}-\varepsilon-\lambda i-1}dt+
                                                       \int_0^1t^{\varepsilon+\lambda i-1}(1-t)^{\frac{d-1}{2}-\varepsilon-\lambda i-1}dt\bigg)
      \bigg|\nonumber \\
  &\leq
    \frac{2|c_{\hbar,\lambda}||x|}{|\Gamma(\varepsilon+\lambda
    i)\Gamma(\frac{d-1}{2}-\varepsilon-\lambda
    i)|}\int_0^1t^{\varepsilon}(1-t)^{\frac{d-1}{2}-\varepsilon
    -1}dt+\frac{|c_{\hbar,\lambda}|}{\Gamma( \frac{d-1}{2})}\nonumber \\
  &\leq  \frac{2|c_{\hbar,\lambda}||x|}{|\Gamma(\varepsilon+\lambda i)\Gamma(\frac{d-1}{2}-\varepsilon-\lambda i)|}\frac{\Gamma(\varepsilon+1)\Gamma(\frac{d-1}{2}-\varepsilon)}{\Gamma(\frac{d+1}{2})}+\frac{|c_{\hbar,\lambda}|}{\Gamma( \frac{d-1}{2})}=A_{\hbar,\lambda}|x|+B_{\hbar,\lambda},\label{eq:bound2}
\end{align}
for $\varepsilon<1$ and constants $A_{\hbar,\lambda},B_{\hbar,\lambda}>0$, where we have split the complex exponential in the integrand as
$e^{iy}=e^{iy}-1+1$ and have used $|e^{iy}-1|\leq |y|$. This
along with the dominated convergence theorem shows that the
limit in \eqref{eq:BIG} converges in tempered distributions
$\mathcal{S}'(\mathbb{R}^d)$. Note that one can refine
\eqref{eq:bound2} using asymptotics of ${\bf M}$ (see
\cite[(2.4)]{Y97}). Indeed, we have the Poincar\'{e} series
\begin{align*}
{\bf M}\Big(\lambda i , \frac{d-1}{2},i r\Big) &\sim \frac{e^{i
  r}(ir)^{\lambda i -\frac{d-1}{2}}}{\Gamma(\lambda
  i)}\sum_{k=0}^{\infty}\frac{(1-\lambda
  i)_{k}(\frac{d-1}{2}-\lambda i)_{k}}{k!}(ir)^{-k}\\
  &\qquad \qquad \qquad +\frac{e^{-\pi \lambda }(ir)^{-\lambda i}}{\Gamma(\frac{d-1}{2}-\lambda i)}\sum_{k=0}^{\infty}\frac{(\lambda i)_k(\lambda i -\frac{d-1}{2}+1)_k}{k!}(-ir)^{-k}
\end{align*}
as $r\to +\infty$, where $(a)_k \coloneqq
\frac{\Gamma(a+k)}{\Gamma(a)}$ is the rising Pochhammer symbol
(see
\cite[\href{https://dlmf.nist.gov/13.7E2}{(13.7.2)}]{NIST:DLMF}). In particular, this shows for fixed $\hbar>0$ and $\lambda \neq 0$ we have
\begin{equation}\label{eq:linfinity}
\psi_{\hbar,\lambda}(\bullet;\bullet) \in L^{\infty}(\mathbb{R}^d \times \mathbb{S}^{d-1}).
\end{equation}
With the functions $\psi_{\hbar,\lambda}(\bullet;\bullet)$, we can define the Hilbert spaces $\mathscr{H}_{\widehat{H}_{\hbar}}(\hbar,\lambda)$ (see \eqref{eq:Hilbert}). The norm of these Hilbert spaces is indeed positive definite, as proved at the end of step 2 of \S \ref{sec:good}. The functions $\psi_{\hbar,\lambda}(\bullet;\bullet)$ also define the Poisson operators $P_{\widehat{H}_{\hbar}}(\hbar,\lambda)$ and the scattering matrices $S_{\widehat{H}_{\hbar}}(\hbar,\lambda)$ (see \eqref{eq:poissonops},\eqref{eq:scatmatrices}, respectively). For concreteness, note that
$P_{\widehat{H}_{\hbar}}(\hbar,\lambda)[Y_{\ell}^{\boldsymbol{m}}]$
is \eqref{eq:geneigfun}, derived in Proposition \ref{prop:main}.
\begin{figure}[!htbp]
   \begin{minipage}{0.5\linewidth}
     \centering
     \includegraphics[width=\linewidth]{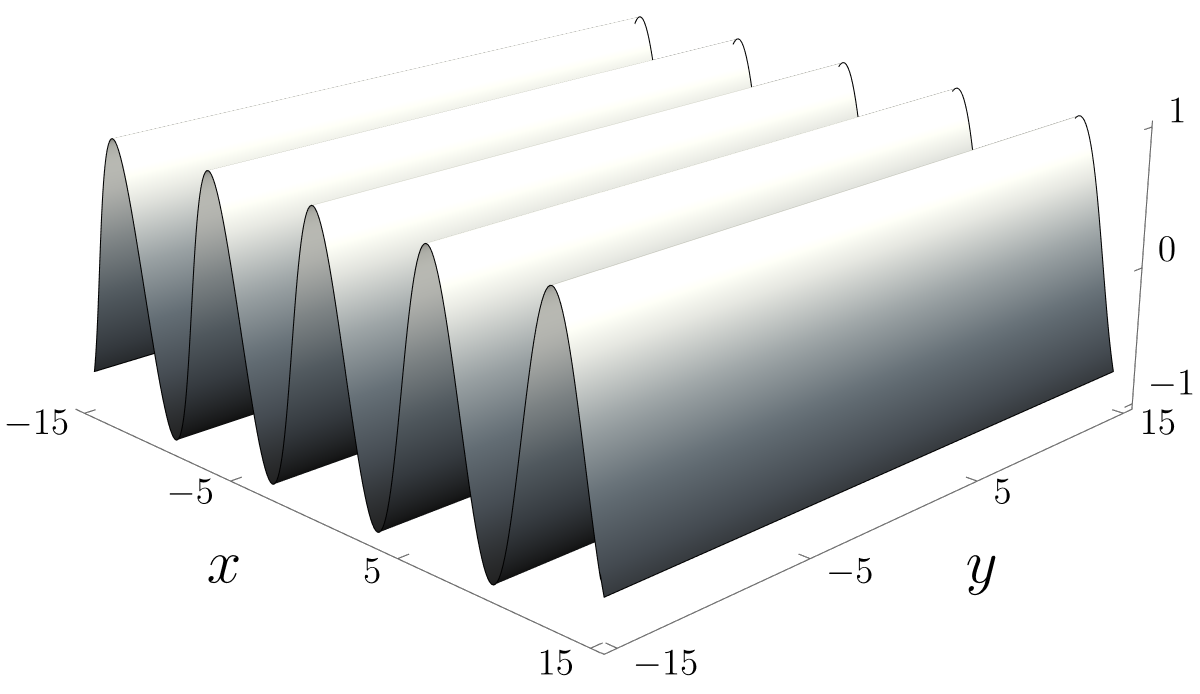}
     \captionsetup{width=0.95\linewidth}
     \caption{Plot of $\Re(e^{i x})$ when $d=2$, $\hbar=\lambda=1$}\label{fig:ppweuc}
   \end{minipage}\hfill
   \begin{minipage}{0.5\linewidth}
     \centering
     \includegraphics[width=\linewidth]{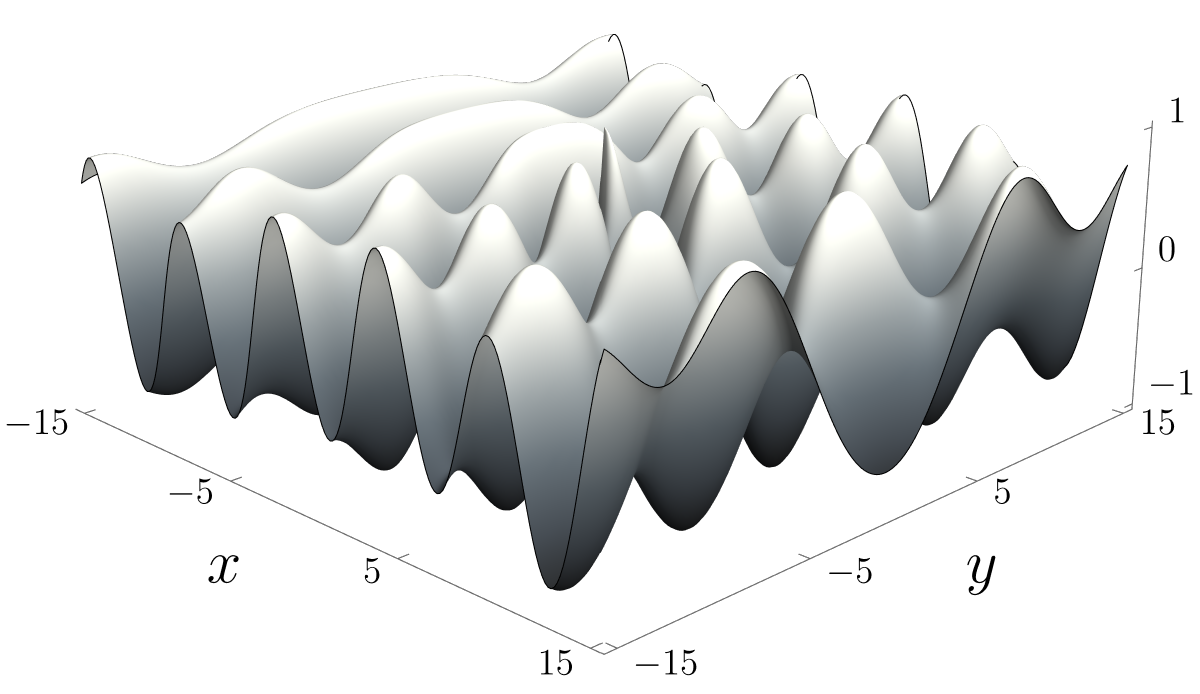}
     \captionsetup{width=0.95\linewidth}
     \caption{Plot of $\frac{1}{\sqrt{2}}\Re(\psi_{1,1}((x,y);(1,0)))$ when $d=2,\hbar=\lambda=1$}\label{fig:ppw}
   \end{minipage}
\end{figure}

\subsection{Spectral and scattering theory of
  $\Delta_{\mathbb{H}^d}$}\label{sec:hyp}
The aim of this section is to give background of analysis on $\mathbb{H}^d$. The standard reference is \cite{H84}, and two nice references focused on $\mathbb{H}^d$ are \cite{B92} and the more recent \cite{GL23}. Define
hyperbolic space by
\begin{equation}\label{eq:hypomeg}
\mathbb{H}^d\coloneqq \{u \in \mathbb{R}^{d}: |u|<1\}
\end{equation}
with the Riemannian metric
$ds^2=\frac{4}{1-|u|^2}(du_1^2+\cdots+du_d^2)$. For any $u \in
\mathbb{H}^d$ and $\theta_0 \in \partial
\mathbb{H}^d=\mathbb{S}^{d-1}$, define $\langle u,\theta_0
\rangle$ to be the (signed) hyperbolic distance from the origin to the
horocycle connecting $u$ and $\theta_0$ (the distance is
negative if the origin lies in the interior of the
horocycle). Define the plane waves
\begin{equation}\label{eq:hypplane}
e_{\lambda}(u;\theta_0)=e^{(\frac{d-1}{2}-\lambda i)\langle u, \theta_0 \rangle}.
\end{equation}
It can be shown that they satisfy
\begin{equation*}
-\Delta_{\mathbb{H}^d}e_{\lambda}(\bullet;\theta_0)=\Big(\lambda^2+\Big(\frac{d-1}{2}\Big)^2\Big)e_{\lambda}(\bullet;\theta_0),
\end{equation*}
and with the
law of cosines, we have
\begin{equation}\label{eq:hypplane2}
e_{\lambda}(u;\theta_0)=\bigg(\frac{1-|u|^2}{|u-\theta_0|^2}\bigg)^{\frac{d-1}{2}-\lambda i}.
\end{equation}
When $d=2$ and $\lambda=i/2$, \eqref{eq:hypplane2} is the Euclidean Poisson kernel (this relation, however, only holds when $d=2$; see \cite{T18}).
With the functions $e_{\lambda}(\bullet;\bullet)$, we can define the Hilbert spaces $\mathscr{H}_{\mathbb{H}^d}(\lambda)$ (see \eqref{eq:Hilbert}). The norm of these Hilbert spaces is indeed positive definite for $\lambda$ not a pole of the denominator of the Harish-Chandra $\mathfrak{c}$ function, as shown in \cite[Introduction, Proposition 4.8]{H84} for $d=2$ and \cite[Chapter III, \S 5]{H70} in general (in our case, this condition is satisfied when $\frac{d-1}{2}+\lambda i \not\in \mathbb{Z}_{\leq 0}$, which is clear since $\lambda \in \mathbb{R}$). The functions $e_{\lambda}(\bullet;\bullet)$ also define the Poisson operators $P_{\mathbb{H}^d}(\lambda)$ and the scattering matrices $S_{\mathbb{H}^d}(\lambda)$ (see \eqref{eq:poissonops},\eqref{eq:scatmatrices}, respectively). For concreteness, note that
$P_{\mathbb{H}^d}(\lambda)[Y_{\ell}^{\boldsymbol{m}}]$ is
\eqref{eq:legend}, derived in Proposition \ref{prop:main}.
\begin{figure}[!htbp]
   \begin{minipage}{0.5\linewidth}
     \centering
     \includegraphics[width=\linewidth]{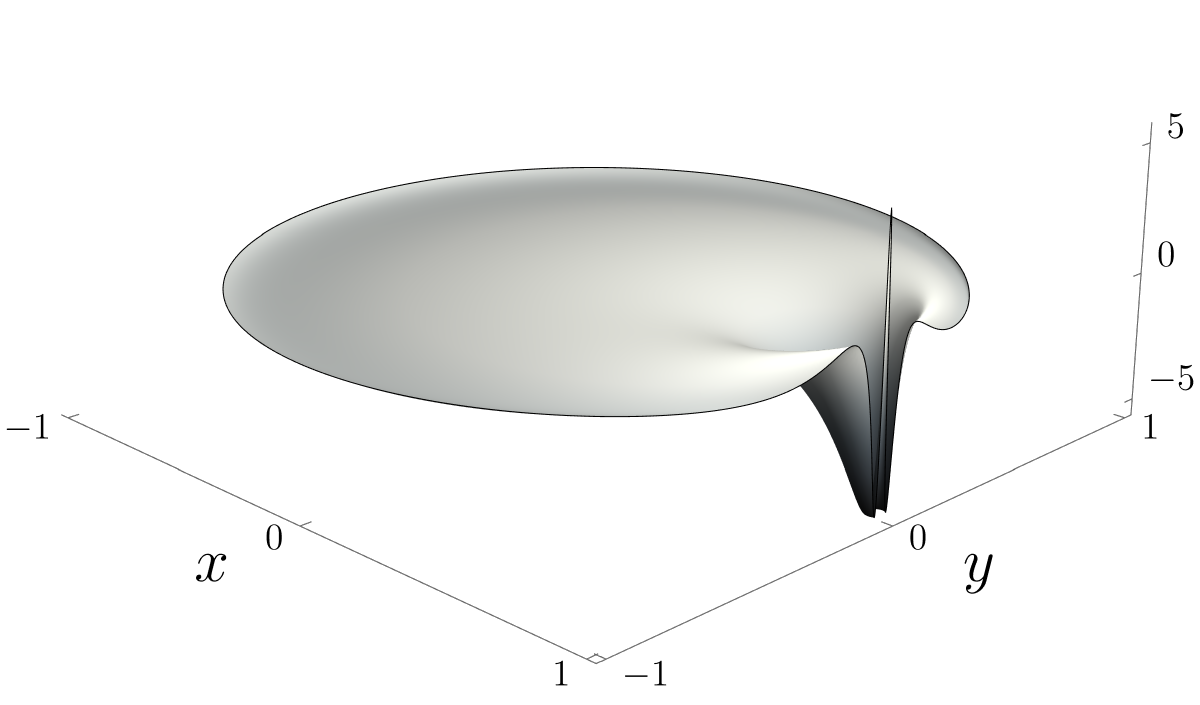}
     \captionsetup{width=0.9\linewidth}
     \caption{Plot of $\Re(e_1((x,y);(1,0)))$ when $d=2$, $\hbar=\lambda=1$.} \label{fig:ppwhyp}
   \end{minipage}\hfill
   \begin{minipage}{0.5\linewidth}
     \centering
     \includegraphics[width=\linewidth]{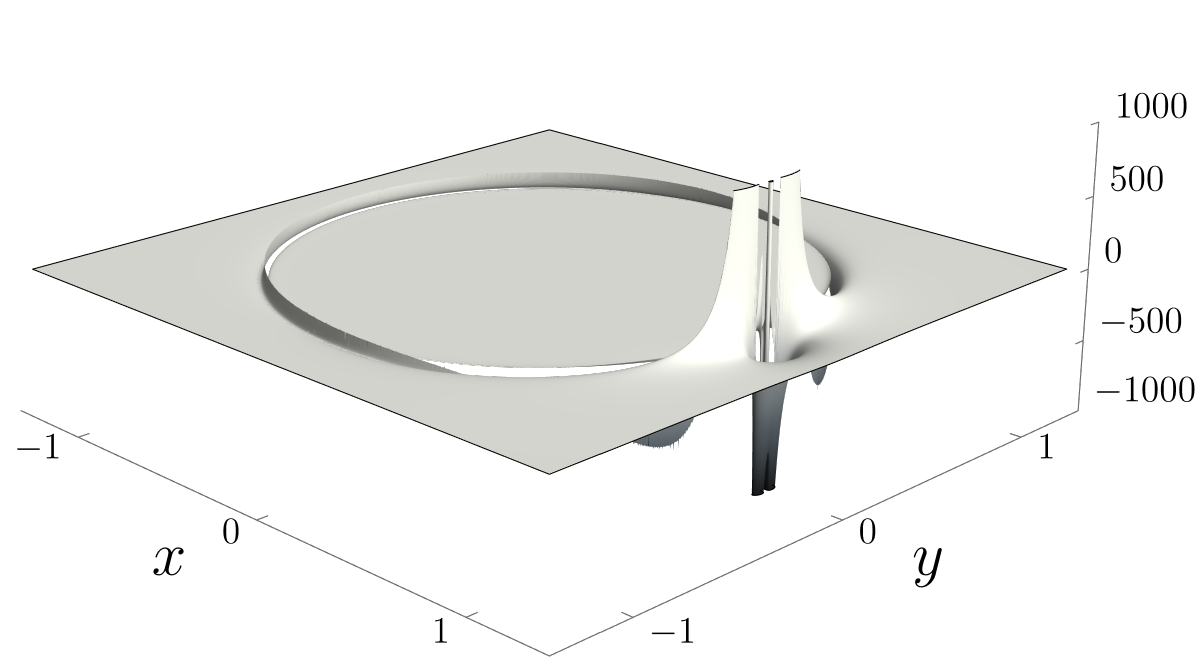}
     \captionsetup{width=\linewidth}
     \caption{Plot of $(\Re \circ \mathcal{F})[\psi_{1,1}(\bullet;(1,0))](x,y)$ when $d=2,\hbar=\lambda=1$.}\label{fig:ft}
   \end{minipage}
\end{figure}

\subsection{Classical mechanical Moser
  map}\label{sec:moser}
As noted in \S\ref{sec:prior}, the positive energy Hamiltonian orbits of the attractive Coulomb Hamiltonian 
$$H^{+}(x,\xi) \coloneqq \frac{|\xi|^2}{2}- \frac{1}{|x|},$$
also known as Kepler orbits, follow planar hyperbolic position-space trajectories, as well as straight-line collisions into the origin in finite time. Here, we explain Moser's reflection condition regularization. A similar regularization can be applied to the repulsive Coulomb Hamiltonian 
$$H^{-}(x,\xi) \coloneqq \frac{|\xi|^2}{2}+ \frac{1}{|x|},$$
where the straight line orbits are reflected at `infinity' (note
that there are no collision orbits in the repulsive setting). We
describe both the attractive and repulsive settings with the
two-sheeted hyperbolic space below.
\par Define the punctured, two-sheeted hyperbolic
space $\widetilde{\mathbb{H}}^d$ by
\begin{equation}\label{eq:hypomeg}
\widetilde{\mathbb{H}}^d\coloneqq \{u \in \mathbb{R}^{d}: |u|\neq 0,1\}
\end{equation}
with the hyperbolic metric
$ds^2=\frac{4}{|1-|u|^2|}(du_1^2+\cdots+du_d^2).$
Let $\iota: \widetilde{\mathbb{H} }^d \to \widetilde{\mathbb{H}
}^d$ be defined as $\iota(u) \coloneqq u/|u|^2$, and one
can easily compute that the pullback
$\iota^*:T^*\widetilde{\mathbb{H} }^d \to T^*
\widetilde{\mathbb{H} }^d$ is
$$\iota^*(u,\eta)=\Big(\frac{u}{|u|^2},\frac{|1-|u|^2|}{2|u|^2}(\eta|u|^2-2u (\eta \cdot
u))\Big)$$
where we have identified the domain $T^*\widetilde{\mathbb{H} }^d=T\widetilde{\mathbb{H} }^d$ with the musical isomorphism induced from the \textit{Euclidean} metric on $\widetilde{\mathbb{H} }^d$, and the codomain $T^*\widetilde{\mathbb{H} }^d=T\widetilde{\mathbb{H} }^d$ with the musical isomorphism induced from the \textit{hyperbolic} metric on $\widetilde{\mathbb{H} }^d$. Furthermore, in both the domain and codomain, we trivialize $T \widetilde{\mathbb{H}}^d=\widetilde{\mathbb{H}}^d \times \mathbb{R}^d$ by viewing $\widetilde{\mathbb{H}}^d \subset \mathbb{R}^d$.
\begin{defn}
Let $E>0$ and $p_0 \coloneqq \sqrt{2E}$. Identifying
$\mathbb{R}^d \times \{|\xi|\neq 0,p_0\} \subset T^*\mathbb{R}^d$,
we define the Moser map $\mathcal{M}_E:\mathbb{R}^d \times \{|\xi|\neq 0,p_0\} \to T^*\widetilde{\mathbb{H}}^d$ by
$$\mathcal{M}_E \coloneqq \iota^* \circ R_{-\pi/2}\circ S_{p_0}
\circ \mathcal{D}_{p_0}$$
where $\mathcal{D}_{p_0}(x,\xi) \coloneqq (p_0x, \frac{1}{p_0}\xi)$ is the symplectic
dilation by $p_0$, $S_{p_0}(x,\xi) \coloneqq (p_0x,\xi)$ is the nonsymplectic dilation in position by $p_0$, and $R_{-\pi/2} \coloneqq
(\xi,-x)$ is the symplectic rotation by $-\pi/2$. Explicitly,
\begin{align*}
\mathcal{M}_{E}(x,\xi)&=\Big(p_0 \frac{\xi}{|\xi|^2}, \frac{|p_0^2-|\xi|^2|}{2|\xi|^2} (-x|\xi|^2+2\xi (x \cdot
                        \xi))\Big),\\
  \mathcal{M}_{E}^{-1}(u,\eta)&=\Big(\frac{2}{p_0^2|1-|u|^2|}(-\eta |u|^2+2u(u \cdot \eta)) ,p_0\frac{u}{|u|^2}\Big).
\end{align*}
\end{defn}
\begin{rem}
It is easy to see that $\mathcal{M}_E$ pulls back the canonical symplectic form of
$T^*\widetilde{\mathbb{H} }^d$ to $p_0\ dx \wedge
d\xi$. Additionally, we see that if
$(u,\eta)=\mathcal{M}_E(x,\xi)$, then $|u|=p_0/|\xi|$ and $|\eta|=|x|\frac{||\xi|^2-p_0^2|}{2}$ and hence
$$ H^{+}(x,\xi)=E \iff 0<|u|<1 \text{ and }|\eta|=1. $$
That is, the Moser map takes the energy surface $\Sigma_E^+ \coloneqq \{H^+(x,\xi)=E\}$ of the attractive Coulomb Hamiltonian (i.e. the Kepler Hamiltonian) to the unit cotangent bundle of the (punctured) Poincar\'{e} unit ball model of hyperbolic space $\mathbb{H}^d$. Additionally, the Moser map takes the energy surface $\Sigma_E^{-} \coloneqq \{H^{-}(x,\xi)=E\}$ of the repulsive Coulomb Hamiltonian $H^{-}(x,\xi)=E$ to the unit cotangent bundle of the other hyperbolic space of $\widetilde{\mathbb{H}}^d$ (that is, the Poincar\'{e} model of the complement of the unit ball). See Figure \ref{fig:2} for a plot of the level sets of $H^{+}$ and $H^{-}$ when $E=1/2$. \par Up to a reparametrization of time, the Moser map $\mathcal{M}_E$ transforms the Coulomb Hamiltonian flow on the energy surface $H^{\pm}(x, \xi) = E$ onto the cogeodesic flow on $T_1^*\widetilde{\mathbb{H}}^d$ parametrized by arc length. In particular, the geodesics ending at the origin of $\widetilde{\mathbb{H}}^d$ are mapped to the collision orbits of $H^{-}$, and the straight-line geodesics of $\widetilde{\mathbb{H}}^d$ going to infinity are mapped to the straight-line Hamiltonian orbits of $H^{+}$ going to infinity. The time reparametrization can be found explicitly. Indeed, 
if $\gamma(t)=(x(t),\xi(t)) \in T^*\mathbb{R}^d$ is a Hamiltonian orbit on the energy surface $H^{\pm}(x,\xi)=E$, then $\varphi(s)=(u(s),\eta(s)) \coloneqq \mathcal{M}_E(\gamma(t(s)) \in T_1^* \widetilde{\mathbb{H} }^d$ is a 
(co)geodesic on $\widetilde{\mathbb{H} }^d$ parametrized by arc length $s$ where
$t(s)$ satisfies
\begin{equation}\label{eq:timechange}
\frac{dt}{ds}=\frac{|x(t(s))|}{p_0}=\frac{2|u(s)|^2}{p_0^3\big|1-|u(s)|^2\big|}=\frac{1}{p_0^3}|u(s)| \cdot\lVert u(s) \rVert_{u(s)},\quad t(0)=0,
\end{equation}
and $\lVert \bullet \rVert$ is the norm induced by the
hyperbolic metric on $\widetilde{\mathbb{H}}^d$. To prove this,
one can adapt the negative energy computations in
\cite{M70,HdL12} to the positive energy case. Curiously,
\eqref{eq:timechange} is a generalization of Kepler's third law
for hyperbolic orbits (see \eqref{eq:kepler3} for the original, negative energy statement). Namely, if $\varphi(s)\in  T_1^*
\widetilde{\mathbb{H} }^d$ is a cogeodesic parametrized by arc
length, then the time parameter $t_E(s)$ of the Hamiltonian
orbit $\mathcal{M}_E^{-1}(\varphi)$ is equal to the time
parameter $t_{1/2}(s)$ of the Hamiltonian orbit
$\mathcal{M}_{1/2}^{-1}(\varphi)$ up to a factor of $1/p_0^3$.
\end{rem}
\begin{figure}[!htbp]
    \centering
    \includegraphics[width=0.75\linewidth]{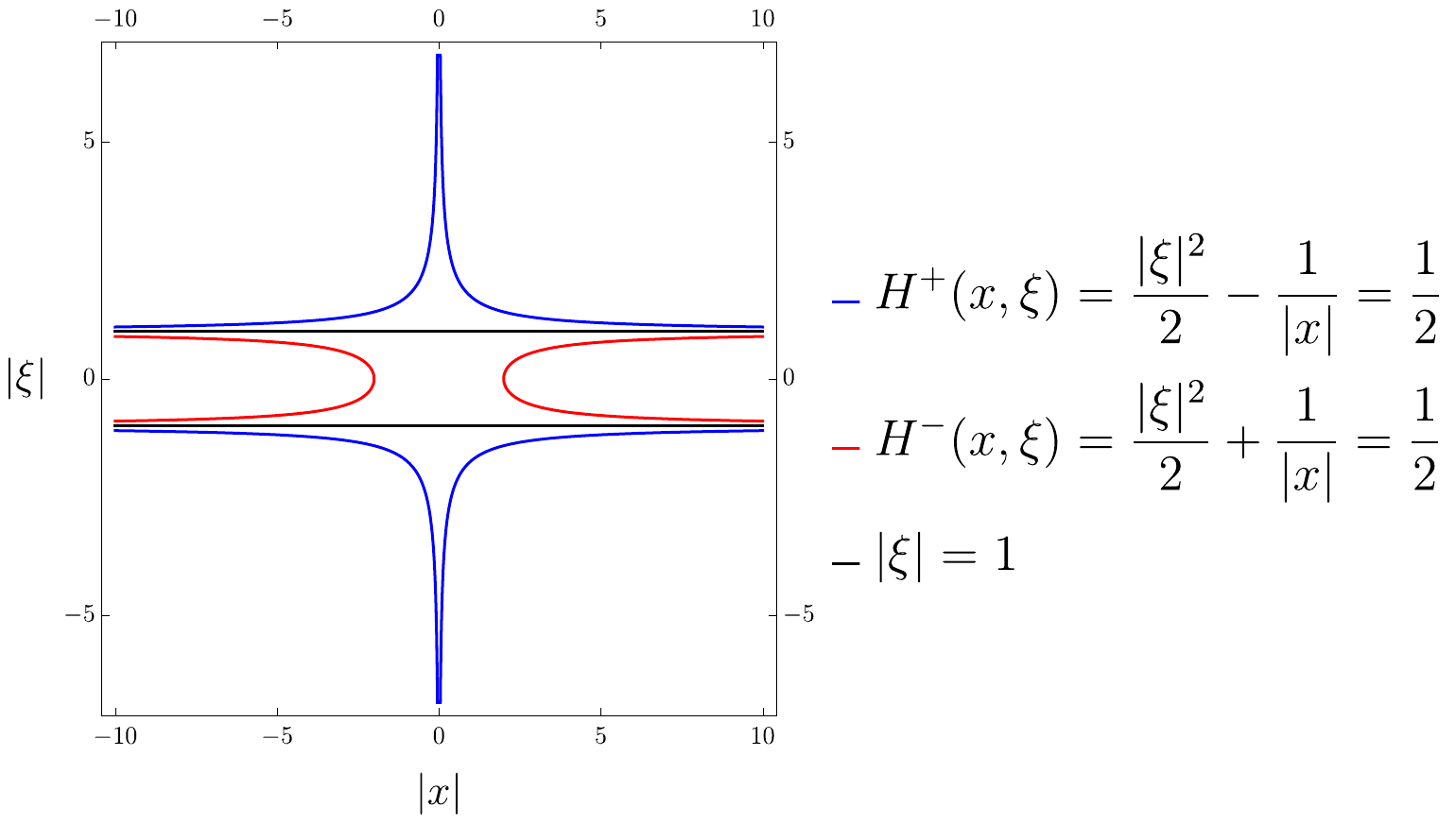} %
    \caption{Plot of the level set $E=1/2$ of the attractive Coulomb Hamiltonian $H^+$ and the repulsive Coulomb Hamiltonian $H^{-}$}\label{fig:2}%
    \end{figure}
    \section{Proof of Theorem \ref{th:main}}\label{sec:good}
    The proof can be broken down into three steps: the first and most pivotal step shows $\mathcal{V}_{\hbar,\lambda}$ maps the perturbed Coulomb plane waves
$\psi_{\hbar,\lambda}$ to the hyperbolic plane waves
$e_{\lambda}$ (i.e.,
$\mathcal{V}_{\hbar,\lambda}[\psi_{\hbar,\lambda}(\bullet;\theta_0)]=e_{\lambda}(\bullet;\theta_0)$);
the second step shows $\mathcal{V}_{\hbar,\lambda}$ commutes with
integrating in $\theta_0$ against $f \in L^2(\mathbb{S}^{d-1})$; and the third step defines the
inverse map $\mathcal{V}_{\hbar,\lambda}^{-1}$ and proves it
indeed is the inverse of  $\mathcal{V}_{\hbar,\lambda}$. Before we begin with step 1, we prove a very important lemma:
\begin{lem}\label{lem:jared} For $\lambda \neq 0, \hbar>0,\theta_0 \in \mathbb{S}^{d-1}$ and
$\psi_{\hbar,\lambda}(\bullet ;\theta_0),\widehat{\mathcal{D}}_{\frac{1}{\hbar\lambda}},\mathcal{F}_{\hbar} $ defined in
\eqref{eq:ppw2},\eqref{eq:mainops},
\begin{align}
&(\widehat{\mathcal{D}}_{\frac{1}{\hbar\lambda}}\circ
  \mathcal{F}_{\hbar})[\psi_{\hbar,\lambda}(\bullet;\theta_0)](\xi)\nonumber \\
  &=\lim_{\varepsilon \to
  0^+} \frac{2^{\frac{d+1}{2}}(\varepsilon + \lambda
  i)}{|\lambda|}\frac{
  \varepsilon^2\Big(1+\frac{(\frac{d-1}{2}-\varepsilon- \lambda
  i)(|\xi|^2-1+\varepsilon^4-2\operatorname{sgn} (\lambda)\varepsilon^2i 
  )}{(\varepsilon+ \lambda i)(|\xi-\theta_0|^2+ \varepsilon^4)}\Big)- \operatorname{sgn}(\lambda)i}{(|\xi|^2-1+\varepsilon^4-2\operatorname{sgn}(\lambda) \varepsilon^2i)^{1+ \varepsilon+\lambda i}(|\xi-\theta_0|^2+ \varepsilon^4)^{\frac{d-1}{2}-\varepsilon- \lambda i}}.\label{eq:allie}
\end{align}
When $\xi$ is away from $\theta_0$, \eqref{eq:allie} simplifies to 
\begin{equation}\label{eq:jared}
(\widehat{\mathcal{D}}_{\frac{1}{\hbar\lambda}}\circ \mathcal{F}_{\hbar})[\psi_{\hbar,\lambda}(\bullet;\theta_0)](\xi)=\frac{2^{\frac{d+1}{2}}}{(|\xi|^2-1- \operatorname{sgn}(\lambda) 0 i)^{1+\lambda i}|\xi-\theta_0|^{d-1- 2\lambda i}}.
\end{equation}
The distribution \eqref{eq:allie} is the unique distribution equaling \eqref{eq:jared} away from $\theta_0$ in the intersection of Sobolev spaces $H^{-\frac{d}{2}-0}(\mathbb{R}^d) \coloneqq \bigcap_{\varepsilon >0} H^{-\frac{d}{2}-\varepsilon}(\mathbb{R}^d) $, up to adding a $c \delta_{\theta_0}$ term.
\end{lem}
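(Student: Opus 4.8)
The plan is to run the Euler integral representation \eqref{eq:BIG} of $\psi_{\hbar,\lambda}(\bullet;\theta_0)$ through $\mathcal{F}_\hbar$ after introducing a second regularization, tied to the same $\varepsilon$, that makes the Fourier integral absolutely convergent. \textbf{Step 1 (a convergent regularization).} Since $\psi_{\hbar,\lambda}(\bullet;\theta_0)\in L^\infty(\mathbb{R}^d)$ by \eqref{eq:linfinity} and the integrand of \eqref{eq:BIG} obeys the $\varepsilon$-uniform linear bound \eqref{eq:bound2}, dominated convergence shows that $\psi_{\hbar,\lambda}(\bullet;\theta_0)$ is the $\mathcal{S}'(\mathbb{R}^d)$-limit as $\varepsilon\to 0^+$ of the function $\psi_{\hbar,\lambda}^\varepsilon$ obtained from \eqref{eq:BIG} by multiplying the integrand by $e^{-\varepsilon^2|x|/(\hbar^2|\lambda|)}$, equivalently by replacing $t$ with $t+i\varepsilon^2\operatorname{sgn}(\lambda)$ in the phase $e^{it|x|/(\hbar^2\lambda)}$. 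For each fixed $\varepsilon>0$ one has $\psi_{\hbar,\lambda}^\varepsilon\in L^1(\mathbb{R}^d)$ and the double integral over $(t,x)\in[0,1]\times\mathbb{R}^d$ converges absolutely thanks to the $\Gamma$-normalization, so $\mathcal{F}_\hbar[\psi_{\hbar,\lambda}^\varepsilon]$ may be computed by Fubini; as $\mathcal{F}_\hbar$ and $\widehat{\mathcal{D}}_{1/(\hbar\lambda)}$ are continuous on $\mathcal{S}'$, it then suffices to evaluate for fixed $\varepsilon$ and pass to the limit.

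\textbf{Step 2 (the inner integral and the affine collapse).} Exchanging the order of integration, the $x$-integral is $\int_{\mathbb{R}^d}e^{-\beta|x|}e^{ic\cdot x}\,dx=c_d\,\beta\,(\beta^2+|c|^2)^{-(d+1)/2}$, $c_d\coloneqq 2^d\pi^{(d-1)/2}\Gamma(\tfrac{d+1}{2})$, the classical Fourier transform of the radial exponential (e.g.\ via the Hankel integral $\int_0^\infty r^{d/2}e^{-\beta r}J_{d/2-1}(r|c|)\,dr$), where $\beta=\varepsilon^2/(\hbar^2|\lambda|)-it/(\hbar^2\lambda)$ and $c$ is the total frequency; here $\operatorname{Re}\beta>0$ and $\operatorname{Im}(\beta^2+|c|^2)\ne 0$ for $t\in(0,1]$, so the power is unambiguous. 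Applying $\widehat{\mathcal{D}}_{1/(\hbar\lambda)}$ amounts to evaluating the frequency at $\xi/(\hbar\lambda)$, and the crucial point is that $\beta^2+|c|^2$ then becomes $(\hbar^2|\lambda|)^{-2}\widetilde Q(t)$ with $\widetilde Q$ \emph{affine} in $t$,
\[
\widetilde Q(t)=(1-t)\bigl(|\xi-\theta_0|^2+\varepsilon^4\bigr)+t\bigl(|\xi|^2-1+\varepsilon^4-2i\operatorname{sgn}(\lambda)\varepsilon^2\bigr),
\]
since $|(1-t)\theta_0-\xi|^2-t^2=(1-t)|\xi-\theta_0|^2+t(|\xi|^2-1)$, the remaining terms coming from the regularization.

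\textbf{Step 3 ($_2F_1$ contiguity and the limit).} What survives is the $t$-integral
\[
\int_0^1 t^{\varepsilon+\lambda i-1}(1-t)^{\frac{d-1}{2}-\varepsilon-\lambda i-1}\,\frac{\varepsilon^2\operatorname{sgn}(\lambda)-it}{\widetilde Q(t)^{(d+1)/2}}\,dt .
\]
Writing $\widetilde Q(t)^{-(d+1)/2}=\widetilde Q(0)^{-(d+1)/2}(1-zt)^{-(d+1)/2}$ with $1-z=\widetilde Q(1)/\widetilde Q(0)$ (and $z\notin[1,\infty)$ since $\operatorname{Im}\widetilde Q(1)\ne 0$) and splitting the numerator as $\varepsilon^2\operatorname{sgn}(\lambda)\cdot 1-i\cdot t$, this becomes a combination of two Euler integrals equal to $B(\varepsilon+\lambda i,\tfrac{d-1}{2}-\varepsilon-\lambda i)\,{}_2F_1(\tfrac{d+1}{2},\varepsilon+\lambda i;\tfrac{d-1}{2};z)$ and $B(\varepsilon+\lambda i+1,\tfrac{d-1}{2}-\varepsilon-\lambda i)\,{}_2F_1(\tfrac{d+1}{2},\varepsilon+\lambda i+1;\tfrac{d+1}{2};z)$. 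The second is of the form ${}_2F_1(a,b;a;z)=(1-z)^{-b}$; the first is of the form ${}_2F_1(c{+}1,b;c;z)$ and collapses by the elementary contiguity ${}_2F_1(c+1,b;c;z)=(1-z)^{-b-1}\,\tfrac{c+(b-c)z}{c}$ to the same power times a factor linear in $z$. Reassembling with the dilation Jacobian and the constants $c_{\hbar,\lambda},c_d$, and re-expressing $z$ through $\widetilde Q(0)=|\xi-\theta_0|^2+\varepsilon^4$ and $\widetilde Q(1)=|\xi|^2-1+\varepsilon^4-2i\operatorname{sgn}(\lambda)\varepsilon^2$, produces exactly the $\varepsilon$-expression in \eqref{eq:allie}; the $\mathcal{S}'$-limit $\varepsilon\to 0^+$ then identifies it with $(\widehat{\mathcal{D}}_{1/(\hbar\lambda)}\circ\mathcal{F}_\hbar)[\psi_{\hbar,\lambda}(\bullet;\theta_0)]$. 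For $\xi\ne\theta_0$ the $\varepsilon^2(\cdots)$ in the numerator of \eqref{eq:allie} tends to $0$, leaving $-\operatorname{sgn}(\lambda)i$, which against the prefactor $2^{(d+1)/2}\lambda i/|\lambda|$ gives $2^{(d+1)/2}$, while the sign of $-2\operatorname{sgn}(\lambda)\varepsilon^2 i$ fixes the branch of $(|\xi|^2-1)^{1+\lambda i}$ for $|\xi|<1$; this is \eqref{eq:jared}.

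\textbf{Step 4 (Sobolev uniqueness), and the main obstacle.} For the last assertion: $\psi_{\hbar,\lambda}(\bullet;\theta_0)\in L^\infty$ forces $\langle x\rangle^{-d/2-\varepsilon}\psi_{\hbar,\lambda}(\bullet;\theta_0)\in L^2$ for every $\varepsilon>0$, hence $\mathcal{F}_\hbar[\psi_{\hbar,\lambda}(\bullet;\theta_0)]\in H^{-d/2-\varepsilon}$, and as $\widehat{\mathcal{D}}_{1/(\hbar\lambda)}$ preserves Sobolev spaces, \eqref{eq:allie} lies in $H^{-d/2-0}$ (existence). If $T_1,T_2\in H^{-d/2-0}$ agree off $\theta_0$, then $T_1-T_2$ is supported at $\theta_0$, so $T_1-T_2=\sum_{|\alpha|\le N}c_\alpha\partial^\alpha\delta_{\theta_0}$; since $\partial^\alpha\delta_{\theta_0}\in H^s$ only for $s<-|\alpha|-d/2$, lying in $\bigcap_{\varepsilon>0}H^{-d/2-\varepsilon}$ forces $c_\alpha=0$ for $|\alpha|\ge 1$, leaving $T_1-T_2\in\mathbb{C}\,\delta_{\theta_0}$. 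The technical heart is Steps 2--3: choosing the regularization tied to the Euler parameter $\varepsilon$ so that simultaneously the Fourier integral converges and the dilated phase $\beta^2+|c|^2$ becomes affine in $t$ — which is exactly what reduces everything to a textbook Euler integral — and then carrying the branch/$i0$ data and all the constants through the $\varepsilon\to 0$ limit so that they conspire into the clean prefactor $2^{(d+1)/2}/|\lambda|$ of \eqref{eq:allie}.
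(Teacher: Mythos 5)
Your proposal is correct and follows essentially the same route as the paper: the same $\varepsilon$-tied exponential damping of the Euler integral, the same Fubini exchange and radial-exponential Fourier formula, the same affine collapse of the quadratic in $t$, and the same Sobolev/point-support uniqueness argument; your ${}_2F_1$ Euler-integral-plus-contiguity evaluation of the $t$-integral is just the paper's Feynman parametrization identity in different notation. No gaps.
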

\begin{rem}
From \eqref{eq:jared}, we see that the singular support of $(\widehat{\mathcal{D}}_{\frac{1}{\hbar\lambda}}\circ
  \mathcal{F}_{\hbar})[\psi_{\hbar,\lambda}(\bullet;\theta_0)]$
  is $\mathbb{S}^{d-1}$. The singularities are comprised of
  rather `mild' $\pm i0$ singularities at points on the sphere
  away from $\theta_0$ and a `strong,' delta-like singularity at
  $\theta_0$. Evidently, like paired Lagrangian distributions,
  these two singularities are `linked' together are not easily
  separated (see Figure \ref{fig:ft}). It is of interest to
  understand the regularization \eqref{eq:allie} of
  \eqref{eq:jared} in terms of Lagrangian distributions (defined
  in \cite[\S 5]{dHUV15} and \cite[\S 6.1]{GW23}), but the
  current uniqueness statement \textit{does} narrow down the
  distribution to some extent.
  \par Regardless, the distribution compares nicely to $(2\pi)^{\frac{d}{2}}\delta_{\theta_0}$, the Fourier transform of the unperturbed plane wave $e^{i x \cdot \theta_0}$ in $\mathbb{R}^d$.
\end{rem}
\begin{proof}[Proof of Lemma \ref{lem:jared}]
Suppose $\lambda>0$, as we can recover the negative case by
using
$\overline{\psi_{\hbar,\lambda}}=\psi_{\hbar,-\lambda}$. Commuting the dilation
with the Fourier transform, we have
\begin{equation*}
(\widehat{\mathcal{D} }_{\frac{1}{\lambda \hbar}}\circ \mathcal{F}_{\hbar})[\psi_{\hbar,\lambda}]=\frac{\sqrt{2\pi}}{\lambda}\mathcal{F}[e^{i \bullet \cdot \theta_0}f]=\frac{\sqrt{2\pi}}{\lambda}\mathcal{F}[f](\bullet -\theta_0)
\end{equation*}
where $f(x) \coloneqq {\bf
  M}\big(\lambda i;\ \frac{d-1}{2};\ (|x|-x\cdot
\theta_0)i\big)$. We have
\begin{align}
f(x)&=\lim_{\varepsilon \to
      0^+}\frac{1}{\Gamma(\varepsilon+\lambda
      i)\Gamma(\frac{d-1}{2}-\varepsilon-\lambda
      i)}\int_0^1e^{it|x|}e^{-i t x \cdot
      \theta_0}t^{\varepsilon+\lambda
      i-1}(1-t)^{\frac{d-1}{2}-\varepsilon-\lambda i-1}dt\label{eq:bb}\\
  &=\lim_{\varepsilon \to 0^+}\frac{1}{\Gamma(\varepsilon+\lambda i)\Gamma(\frac{d-1}{2}-\varepsilon-\lambda i)}\int_0^1e^{(-\varepsilon^2+it)|x|}e^{-itx \cdot \theta_0}t^{\varepsilon+\lambda i-1}(1-t)^{\frac{d-1}{2}-\varepsilon-\lambda i-1}dt\label{eq:holiday}
\end{align}
As noted proceeding \eqref{eq:bound2}, the \eqref{eq:bb} converges in
$\mathcal{S}'(\mathbb{R}^d)$. Additionally, \eqref{eq:holiday} converges in
$\mathcal{S}'(\mathbb{R}^d)$. Indeed, subtracting the two expressions inside the limits when paired with $\varphi \in \mathcal{S}(\mathbb{R}^d)$, we
have
\begin{align*}
&\bigg|\frac{1}{\Gamma(\varepsilon+\lambda
  i)\Gamma(\frac{d-1}{2}-\varepsilon-\lambda
  i)}\int_{\mathbb{R}^d}\int_0^1(e^{-\varepsilon^2|x|}-1)e^{it(|x|-x
  \cdot \theta_0)}t^{\varepsilon+\lambda
  i-1}(1-t)^{\frac{d-1}{2}-\varepsilon-\lambda i-1}dt
  \varphi(x)dx\bigg|\\
  & \leq \frac{1}{|\Gamma(\varepsilon+\lambda
  i)\Gamma(\frac{d-1}{2}-\varepsilon-\lambda
  i)|}\int_{\mathbb{R}^d}\int_0^1t^{\varepsilon-1}(1-t)^{\frac{d-1}{2}-\varepsilon-1}dt|e^{-\varepsilon^2|x|}-1||\varphi(x)|
dx\\
  &=\frac{\Gamma(\varepsilon)\Gamma(\frac{d-1}{2}-\varepsilon)}{\Gamma(\frac{d-1}{2})|\Gamma(\varepsilon+\lambda
  i)\Gamma(\frac{d-1}{2}-\varepsilon-\lambda
    i)|}\int_{\mathbb{R}^d}|e^{-\varepsilon^2|x|}-1||\varphi(x)|dx=O(\varepsilon),
\end{align*}
since $|e^{-\varepsilon^2|x|}-1|\leq \varepsilon^2|x|$ and $\Gamma(\varepsilon)=O(\varepsilon^{-1})$. The Fourier transform in $x$ therefore commutes with the limit in \eqref{eq:holiday}, and we can additionally swap the Fourier transform and
the integral by the Fubini-Tonelli theorem after pairing with $\varphi \in \mathcal{S}(\mathbb{R}^d)$. Using the identity (see \cite[pp. 247]{T11})\begin{align*}
  \mathcal{F}(e^{-(\varepsilon^2-it)|\bullet|})(|\xi|)&=\frac{2^{\frac{d}{2}}\Gamma(\frac{d+1}{2})}{\sqrt{\pi}}\frac{\varepsilon^2-it}{(|\xi|^2+(\varepsilon^2-it)^2)^{\frac{d+1}{2}}},                          
                                                                                                            \end{align*}
we see
\begin{align*}
&\mathcal{F}[f](\xi)=\lim_{\varepsilon \to
  0^+}\frac{2^{\frac{d}{2}}\Gamma(\frac{d+1}{2})}{\sqrt{\pi}\Gamma(\varepsilon+\lambda
  i)\Gamma(\frac{d-1}{2}-\varepsilon-\lambda
  i)}\int_0^1\frac{(\varepsilon^2-it)t^{\varepsilon+\lambda
  i-1}(1-t)^{\frac{d-1}{2}-\varepsilon-\lambda
  i-1}}{(|\xi+t \theta_0|^2+(\varepsilon^2
  -it)^2)^{\frac{d+1}{2}}}dt\\
  &=\lim_{\varepsilon \to
  0^+}\frac{2^{\frac{d}{2}}\Gamma(\frac{d+1}{2})}{\sqrt{\pi}\Gamma(\varepsilon+\lambda
  i)\Gamma(\frac{d-1}{2}-\varepsilon-\lambda
  i)}\int_0^1\frac{(\varepsilon^2-it)t^{\varepsilon+\lambda
  i-1}(1-t)^{\frac{d-1}{2}-\varepsilon-\lambda
  i-1}}{(|\xi|^2+\varepsilon^4+2t (\xi \cdot \theta -i
    \varepsilon^2))^{\frac{d+1}{2}}}dt\\
   &=\lim_{\varepsilon \to
  0^+}\frac{2^{\frac{d}{2}}\Gamma(\frac{d+1}{2})}{\sqrt{\pi}\Gamma(\varepsilon+\lambda
  i)\Gamma(\frac{d-1}{2}-\varepsilon-\lambda
  i)}\int_0^1\frac{(\varepsilon^2-it)t^{\varepsilon+\lambda
  i-1}(1-t)^{\frac{d-1}{2}-\varepsilon-\lambda
  i-1}dt}{\big((|\xi+\theta_0|^2-1+\varepsilon^4-2i \varepsilon^2 )t + (|\xi|^2+ \varepsilon^4)(1-t)\big)^{\frac{d+1}{2}}}
\end{align*}
in distribution. We now take advantage of the Feynman
parametrization identity
\begin{equation}\label{eq:feynman}
\frac{1}{z^{\alpha}w^{\beta}}=\frac{\Gamma(\alpha+\beta)}{\Gamma(\alpha)\Gamma(\beta)}\int_0^1\frac{t^{\alpha-1}(1-t)^{\beta-1}}{\big(zt+w(1-t)\big)^{\alpha+\beta}}dt,\quad \begin{array}{l}\Re \alpha,\Re \beta>0,\\ 0 \not\in \operatorname{conv}(z,w)\subset \mathbb{C},\end{array},
\end{equation}
which can be used to also show
\begin{equation*}\label{eq:feynman2}
\frac{1}{z^{\alpha}w^{\beta}}\bigg(1+\frac{\beta z}{(\alpha-1)w}\bigg)=\frac{\Gamma(\alpha+\beta)}{\Gamma(\alpha)\Gamma(\beta)}\int_0^1\frac{t^{\alpha-2}(1-t)^{\beta-1}}{\big(zt+w(1-t)\big)^{\alpha+\beta}}dt,\ \begin{array}{l}\Re \alpha>1,\Re \beta>0,\\ 0 \not\in \operatorname{conv}(z,w)\subset \mathbb{C},\end{array},
\end{equation*}
(for \eqref{eq:feynman}, see \cite[14a, 15a]{F49} for $\alpha, \beta \in \mathbb{Z}_{>0}$. The general statement can be found in \cite[(3.197) 4.]{GR07}, but it follows quickly from equating the series and integral definitions of the ${}_2F_1$ functions). We then have
\begin{equation}\label{eq:hell}
\mathcal{F}[f](\xi) =\lim_{\varepsilon \to
  0^+} \frac{2^{\frac{d}{2}}(\varepsilon + \lambda
  i)}{\sqrt{\pi}}\frac{
  \varepsilon^2\Big(1+\frac{(\frac{d-1}{2}-\varepsilon- \lambda
  i)(|\xi+\theta_0|^2-1+\varepsilon^4-2i \varepsilon^2
  )}{(\varepsilon+ \lambda i)(|\xi|^2+ \varepsilon^4)}\Big)-i}{(|\xi+\theta_0|^2-1+\varepsilon^4-2i \varepsilon^2 )^{1+ \varepsilon+\lambda i}(|\xi|^2+ \varepsilon^4)^{\frac{d-1}{2}-\varepsilon- \lambda i}}.
\end{equation}
The singular support of this distribution is at the sphere centered at $-\theta_0$ of radius 1, and it is easy to show that if $\varphi \in \mathcal{S}(\mathbb{R}^d)$ where $0\not\in \operatorname{supp}(\varphi)$ we have the distributional pairing
\begin{equation*}
\big(\mathcal{F}[f],\varphi\big)=\bigg(\frac{2^{\frac{d}{2}}\lambda}{\sqrt{\pi}}\frac{1}{(|\xi+\theta_0|^2-1- 0 i)^{1+\lambda i}|\xi|^{d-1- 2\lambda i}},\varphi \bigg),
\end{equation*}
as desired. For the uniqueness statement, since the distribution is the Fourier transform of a function in $L^{\infty}$ (by \eqref{eq:linfinity}), the distribution is automatically in $H^{-\frac{d}{2}-0}$. If we have another distribution in $H^{-\frac{d}{2}-0}$ equal to \eqref{eq:jared} away from $\theta_0$, then their difference is supported at $\theta_0$ in $H^{-\frac{d}{2}-0}$. By \cite[Theorem 2.3.4.]{H85part0}, this difference can only be $c \delta_{\theta_0}$, as desired.
\end{proof}
\proofpart{Step}{$\mathcal{V}_{\hbar,\lambda}[\psi_{\hbar,\lambda}(\bullet;\theta_0)]=e_{\lambda}(\bullet;\theta_0)$
  for any $\theta_0 \in \mathbb{S}^{d-1}$}
Given Lemma \ref{lem:jared}, step 1 is proved by applying the
rest of the maps in the definition of $\mathcal{V}_{\hbar,\lambda}$. Recall from \eqref{eq:mainmap} and \eqref{eq:otherops}
$$\mathcal{V}_{\hbar,\lambda} \coloneqq \mathcal{I} \circ M
\circ R_{\{|\bullet|>1\}}  \circ
\widehat{\mathcal{D}}_{\frac{1}{\hbar\lambda}} \circ
\mathcal{F}_{\hbar} $$
where $R_{\{|\bullet|>1\}}(f) \coloneqq f|_{\{|\bullet|>1\}},
M(f) \coloneqq |\frac{|\bullet|^2-1}{2}|^{\frac{d+1}{2}}f,
\mathcal{I}(f) \coloneqq f(\frac{\bullet}{|\bullet|^2})$. Lemma
\ref{lem:jared} shows
$(\widehat{\mathcal{D}}_{\frac{1}{\hbar\lambda}}\circ
\mathcal{F}_{\hbar})(\psi_{\hbar,\lambda}(\bullet;\theta_0))$ is
smooth on the exterior of the unit ball, so applying the
remaining three maps is routine, and in the end one obtains
$e_{\lambda}(\bullet; \theta_0)$, as desired.
\proofpart{Step}{$\displaystyle \mathcal{V}_{\hbar,\lambda}\Big[\int_{\mathbb{S}^{d-1}}\psi_{\hbar,\lambda}(\bullet; \theta)f(\theta)d\theta \Big]=\int_{\mathbb{S}^{d-1}}\mathcal{V}_{\hbar,\lambda}[\psi_{\hbar,\lambda}(\bullet; \theta)]f(\theta)d\theta$ for all $f \in L^2(\mathbb{S}^{d-1})$, and $\mathscr{H}_{\widehat{H}_{\hbar}}(\hbar,\lambda)$ are well-defined Hilbert spaces.}
Let $f \in L^2(\mathbb{S}^{d-1})$ and define
\begin{equation*}
F_{\widehat{H}_{\hbar}}^{\hbar,\lambda}(x)\coloneqq \int_{\mathbb{S}^{d-1}}\psi_{\hbar,\lambda}(x; \theta)f(\theta)d\theta.
\end{equation*}
With \eqref{eq:linfinity} and the
preceding properties of $\psi_{\hbar,\lambda}$, we see $F_{\widehat{H}_{\hbar}}^{\hbar,\lambda}
\in C^{\infty}(\mathbb{R}^d \setminus 0) \cap C(\mathbb{R}^d)$
with $$\lVert F_{\widehat{H}_{\hbar}}^{\hbar,\lambda}\rVert_{L^{\infty}}\leq|\mathbb{S}^{d-1}| \lVert f\rVert_{L^2}^2 \lVert \psi_{\hbar,\lambda}(\bullet;\bullet)\rVert_{L^{\infty}(\mathbb{R}^d \times \mathbb{S}^{d-1})}^2.$$ We can
therefore commute $\widehat{\mathcal{D}}_{\frac{1}{\hbar\lambda}}\circ
\mathcal{F}_{\hbar}$ in $x$ with integration in $\theta$ by the
Fubini-Tonelli theorem (after pairing with $\varphi \in
\mathcal{S}(\mathbb{R}^d)$ in $x$). We have then shown
\begin{equation}\label{eq:clearly}
\Big((\widehat{\mathcal{D}}_{\frac{1}{\hbar\lambda}}\circ
\mathcal{F}_{\hbar})[F_{\widehat{H}_{\hbar}}^{\hbar,\lambda}],\varphi \Big)=\int_{\mathbb{S}^{d-1}}\Big((\widehat{\mathcal{D}}_{\frac{1}{\hbar\lambda}}\circ
\mathcal{F}_{\hbar})[\psi_{\hbar,\lambda}(\bullet;\theta)],\varphi \Big)d\theta,\quad \text{for all }\varphi \in \mathcal{S}(\mathbb{R}^d). 
\end{equation}
It remains to show that the
remaining maps $ \mathcal{I} \circ M
\circ R_{\{|\bullet|>1\}}$ commute with integration in
$\theta$. Indeed, \eqref{eq:clearly} shows
$R_{\{|\bullet|>1\}}:\mathcal{S}' \to \mathcal{S}'$  commutes
with the integration in $\theta$, by definition. The integrand
is then a smooth function in $x$ by Lemma \ref{lem:jared}, and
we conclude $\mathcal{I}$ and $M$ commute with the integration,
as desired.
\par This proof shows that the norm of
$\mathscr{H}_{\widehat{H}_{\hbar}}(\hbar,\lambda)$ is positive
definite (defined at \eqref{eq:Hilbert}) and hence defines a
Hilbert space. Indeed, if
$F_{\widehat{H}_{\hbar}}^{\hbar,\lambda} \equiv 0$, we apply
$\mathcal{V}_{\hbar,\lambda}$ on both sides and use the positive
definiteness of the norm of the Hilbert space
$\mathscr{H}_{\mathbb{H}^d}(\lambda)$ (also defined at
\eqref{eq:Hilbert}), which was proved by Helgason (see \S
\ref{sec:hyp} for the references).
\par Combining steps 1 and 2, we have shown that the diagram in Theorem \ref{th:main} commutes. Finally we address the inverse map of $\mathcal{V}_{\hbar,\lambda}$.
\proofpart{Step}{$\mathcal{V}_{\hbar,\lambda}$ is unitary.}
Define the intermediary Hilbert space of smooth functions on
the exterior of the ball
\begin{equation}\label{eq:ihilbertrest}
\mathscr{H}_{\lambda} \coloneqq \Big\{ F_{\lambda
                                                   }
  \coloneqq
                                                   \int_{\mathbb{S}^{d-1}}\frac{2^{\frac{d+1}{2}}}{(|\bullet|^2-1)^{1+\lambda i}|\bullet-\theta_0|^{d-1- 2\lambda i}}f(\theta)d\theta \mid f
                                                   \in
                                                   L^2(\mathbb{S}^{d-1})
                                                   \Big\},\ \lVert F_{\lambda}\rVert_{\mathscr{H}_{\lambda}}\coloneqq
\lVert f \rVert_{L^2},
\end{equation}
and the intermediary Hilbert space of tempered distributions on $\mathbb{R}^d$
\begin{equation}\label{eq:ihilbert}
\widetilde{\mathscr{H}}_{\lambda} \coloneqq
                                                   \Big\{\widetilde{F}_{\lambda} \coloneqq 
                                                   \int_{\mathbb{S}^{d-1}}(\widehat{\mathcal{D}}_{\frac{1}{\hbar\lambda}}\circ
\mathcal{F}_{\hbar})(\psi_{\hbar,\lambda}(\bullet;\theta))f(\theta)d\theta \mid f
                                                   \in
                                                   L^2(\mathbb{S}^{d-1})
                                                   \Big\},\ \lVert \widetilde{F}_{\lambda}\rVert_{\widetilde{\mathscr{H}}_{\lambda}}\coloneqq \lVert f \rVert_{L^2}.
\end{equation}
The argument at the end of step 2 shows these are indeed Hilbert
spaces. Step 2 also shows that
$\mathcal{V}_{\hbar,\lambda}:\mathscr{H}_{\widehat{H}_{\hbar}}(\hbar,\lambda)
\to \mathscr{H}_{\mathbb{H}^d}(\lambda)$ can be decomposed as
$$\mathscr{H}_{\widehat{H}_{\hbar}}(\hbar,\lambda)
\xrightarrow{\widehat{\mathcal{D}
  }_{\frac{1}{\hbar\lambda}}\circ \mathcal{F}_{\hbar}}
\widetilde{\mathscr{H}}_{\lambda}
\xrightarrow{R_{\{|\bullet|>1\}}} \mathscr{H}_{\lambda}
\xrightarrow{\mathcal{I} \circ M}
\mathscr{H}_{\mathbb{H}^d}(\lambda).$$
Each map preserves the norm, so $\mathcal{V}_{\hbar,\lambda}$ is
an isometry. To show it is surjective (and hence unitary), we
produce the inverse map. Define
$$\mathcal{V}_{\hbar,\lambda}^{-1}:\mathscr{H}_{\mathbb{H}^d}(\lambda)
\to \mathscr{H}_{\widehat{H}_{\hbar}}(\hbar,\lambda),\quad
\mathcal{V}_{\hbar,\lambda}^{-1} \coloneqq
\mathcal{F}_{\hbar}^{-1} \circ \widehat{\mathcal{D} }_{\hbar \lambda }\circ E_{\lambda} \circ
M^{-1}\circ \mathcal{I}$$
where the extension operator $E_{\lambda }$ is defined by
$$E_{\lambda}:\mathscr{H}_{\lambda} \to
\widetilde{\mathscr{H}}_{\lambda},\quad
E_{\lambda}(F_{\lambda})\coloneqq \widetilde{F}_{\lambda},$$
in the notation of \eqref{eq:ihilbertrest} and \eqref{eq:ihilbert}. That is, we extend the functions originally defined on the exterior of the unit ball to distributions on all of $\mathbb{R}^d$ according to the distribution in Lemma \ref{lem:jared}. One can show that $\mathcal{V}_{\hbar,\lambda}^{-1}$ is indeed the inverse of $\mathcal{V}_{\hbar,\lambda}$, and we are done.
\section{Appendix: Verifying Theorem \ref{th:main} for Spherical
  Harmonics}
The following proposition gives the outputs of the Poisson operators applied to the spherical harmonics referenced in \S\ref{sec:back}.
\begin{prop}\label{prop:main}
With the Poisson operators defined in \eqref{eq:poissonops}, we have
\begin{align}
P_{\widehat{H}_{\hbar}}(\hbar,\lambda)[Y_{\ell}^{\boldsymbol{m}}](x)&=\psi_{\hbar,\lambda,\ell}^{\boldsymbol{m}}(x) \coloneqq \frac{c_{\hbar,\lambda}2^{d-1}\pi^{\frac{d-1}{2}}\Gamma(\frac{d-1}{2}+\ell-\lambda
                               i)}{\Gamma(\frac{d-1}{2}-\lambda
                                                                      i)} \nonumber
  \\
  &\qquad \qquad \Big(\frac{2i|x|}{\hbar^2\lambda}\Big)^{\ell}e^{i\frac{|x|}{\hbar^2\lambda}}{\bf M}\Big(\frac{d-1}{2}+\ell-\lambda i;\ d -1 +2\ell;\ -\frac{2i|x|}{\hbar^2\lambda}\Big)Y_{\ell}^{\boldsymbol{m}}(\widehat{x})\label{eq:legend2}\\
  P_{\mathbb{H}^d}(\lambda)[Y_{\ell}^{\boldsymbol{m}}](\xi) &=(2\pi)^{\frac{d}{2}}\frac{\Gamma(\frac{d-1}{2}-i\lambda+\ell)}{\Gamma(\frac{d-1}{2}-i\lambda)}\bigg(\frac{2|\xi|}{1-|\xi|^2} \bigg)^{1-\frac{d}{2}}P_{-\frac{1}{2}+i \lambda }^{-(\frac{d}{2}-1+\ell)}\Big(\frac{1+|\xi|^2}{1-|\xi|^2}\Big)Y_{\ell}^{\boldsymbol{m}}(\widehat{\xi}),\label{eq:legend}
\end{align}
where $P_{\bullet}^{\bullet}$ denotes Legendre's associated $P$ functions, and $c_{\hbar,\lambda} \coloneqq |\lambda|^{\frac{d}{2}-1}\hbar^{\frac{d}{2}}\sqrt{2\pi}$.
\end{prop}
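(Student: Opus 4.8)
The plan is to reduce both identities to the Funk--Hecke theorem followed by standard special-function evaluations. Both Poisson operators in \eqref{eq:poissonops} are $\operatorname{SO}(d)$-equivariant --- $\widehat{H}_{\hbar}$ is rotation invariant and $e_{\lambda}(Ru;R\theta)=e_{\lambda}(u;\theta)$ --- and their integrands depend on $\theta$ only through the inner products $\langle\widehat{x},\theta\rangle$ (via $x\cdot\theta$) and $\langle\widehat{u},\theta\rangle$ (since $|u-\theta|^2=1+|u|^2-2|u|\langle\widehat{u},\theta\rangle$). Hence the Funk--Hecke theorem applied to the degree-$\ell$ harmonic $Y_{\ell}^{\boldsymbol{m}}$ gives
\[
P_{\widehat{H}_{\hbar}}(\hbar,\lambda)[Y_{\ell}^{\boldsymbol{m}}](x)=g_{\ell}(|x|)\,Y_{\ell}^{\boldsymbol{m}}(\widehat{x}),\qquad P_{\mathbb{H}^d}(\lambda)[Y_{\ell}^{\boldsymbol{m}}](u)=h_{\ell}(|u|)\,Y_{\ell}^{\boldsymbol{m}}(\widehat{u}),
\]
where $g_{\ell}$ and $h_{\ell}$ are explicit one-dimensional integrals of the radial profiles of $\psi_{\hbar,\lambda}$ and $e_{\lambda}$ against the Gegenbauer weight $C_{\ell}^{(d/2-1)}(t)(1-t^2)^{(d-3)/2}\,dt$ on $[-1,1]$. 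It then remains to identify $g_{\ell}$ and $h_{\ell}$ with the radial factors in \eqref{eq:legend2} and \eqref{eq:legend}.

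For the hyperbolic identity I would write $e_{\lambda}(u;\theta)=(1-|u|^2)^{s}\,(1-2|u|\langle\widehat{u},\theta\rangle+|u|^2)^{-s}$ with $s=\tfrac{d-1}{2}-\lambda i$, so that $h_{\ell}(r)$ is a constant times $(1-r^2)^{s}$ times the classical integral $\int_{-1}^{1}(1-2rt+r^2)^{-s}C_{\ell}^{(d/2-1)}(t)(1-t^2)^{(d-3)/2}\,dt$. This integral has a known closed form --- a ratio of $\Gamma$-factors times $r^{\ell}$ times a Gauss hypergeometric ${}_2F_1$ in $r^2$ (cf.\ \cite{GR07}) --- and a standard quadratic transformation of the ${}_2F_1$ turns it into $P_{-1/2+i\lambda}^{-(d/2-1+\ell)}\!\big(\tfrac{1+r^2}{1-r^2}\big)$. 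Reassembling the constants (including the $(1-|u|^2)^{s}$ prefactor and the $(2\pi)^{d/2}$ coming from the normalization) produces \eqref{eq:legend}; this is, in essence, the classical computation of the Poisson transform of a spherical harmonic on $\mathbb{H}^d$.

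For the Coulomb identity I would insert the $\varepsilon$-regularized Euler representation \eqref{eq:BIG} of ${\bf M}$ into $\int_{\mathbb{S}^{d-1}}\psi_{\hbar,\lambda}(x;\theta)Y_{\ell}^{\boldsymbol{m}}(\theta)\,d\theta$, combine the two exponentials into $e^{it|x|/(\hbar^2\lambda)}e^{i(1-t)x\cdot\theta/(\hbar^2\lambda)}$, and interchange the $t\in(0,1)$-integral with the sphere integral (by Fubini and the uniform bounds around \eqref{eq:bound2}, which also legitimize the limit $\varepsilon\to0^+$). The sphere integral then becomes $\int_{\mathbb{S}^{d-1}}e^{i(1-t)x\cdot\theta/(\hbar^2\lambda)}Y_{\ell}^{\boldsymbol{m}}(\theta)\,d\theta$, which by the Bochner--Hecke plane-wave formula is proportional to $i^{\ell}\big(\tfrac{(1-t)|x|}{\hbar^2\lambda}\big)^{1-d/2}J_{\ell+d/2-1}\!\big(\tfrac{(1-t)|x|}{\hbar^2\lambda}\big)Y_{\ell}^{\boldsymbol{m}}(\widehat{x})$ (taking $\lambda>0$; the case $\lambda<0$ follows from $\overline{\psi_{\hbar,\lambda}}=\psi_{\hbar,-\lambda}$). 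Pulling out the $|x|$-powers leaves $g_{\ell}(|x|)$ as a constant times a limit of integrals of the form $\int_0^1 e^{it|x|/(\hbar^2\lambda)}t^{\varepsilon+\lambda i-1}(1-t)^{-1/2-\varepsilon-\lambda i}J_{\ell+d/2-1}\big(\tfrac{(1-t)|x|}{\hbar^2\lambda}\big)\,dt$, which after the substitution $t\mapsto 1-t$ is a known Bessel-function integral representation of the Whittaker --- equivalently Olver ${\bf M}$ --- function (cf.\ \cite[\S13.16]{NIST:DLMF} and \cite{O74}); it evaluates to $\big(\tfrac{2i|x|}{\hbar^2\lambda}\big)^{\ell}e^{i|x|/(\hbar^2\lambda)}{\bf M}\!\big(\tfrac{d-1}{2}+\ell-\lambda i;\,d-1+2\ell;\,-\tfrac{2i|x|}{\hbar^2\lambda}\big)$ up to an explicit $\Gamma$-ratio. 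Substituting $c_{\hbar,\lambda}=|\lambda|^{d/2-1}\hbar^{d/2}\sqrt{2\pi}$ and collecting the factors yields \eqref{eq:legend2}. (As a cross-check one may avoid the nonstandard integral: $g_{\ell}(|x|)Y_{\ell}^{\boldsymbol{m}}(\widehat{x})$ satisfies $\widehat{H}_{\hbar}[\cdot]=E_{\lambda}(\hbar)[\cdot]$ with angular part $Y_{\ell}^{\boldsymbol{m}}$, so $g_{\ell}$ solves the $\ell$-th radial Whittaker-type ODE and, being continuous at the origin, must be $O(|x|^{\ell})$ there and hence a constant multiple of the regular solution \eqref{eq:geneigfun}; the constant is then pinned down from the $|x|^{\ell}Y_{\ell}^{\boldsymbol{m}}$-coefficient of the plane wave, read off from the leading term of the Gegenbauer generating function.)

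The conceptual content here is light; the real work is bookkeeping, and the main obstacle is the special-function identity evaluating the remaining Coulomb $t$-integral --- carrying a Bessel factor of index $\ell+d/2-1$ --- to an Olver ${\bf M}$ whose first two parameters are the \emph{shifted} quantities $\tfrac{d-1}{2}+\ell-\lambda i$ and $d-1+2\ell$: it is exactly this step that promotes the angular momentum $\ell$ from a Bessel index into the confluent-hypergeometric parameters. The pervasive secondary difficulty is tracking all the normalization constants (the $(2\pi)^{d/2}$, the $i^{\ell}$, the powers of $\hbar^2\lambda$, the $\Gamma$-ratios, and the choice of $c_{\hbar,\lambda}$) so that both formulas come out exactly as stated, together with the need --- since $\lambda i$ is purely imaginary --- to legitimize every interchange of integrals and the passage $\varepsilon\to0^+$ via the dominated-convergence bounds already established in \S\ref{sec:back}.
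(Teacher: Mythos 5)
Your proposal is correct and follows essentially the same route as the paper: reduce both sphere integrals to radial special-function integrals (the paper packages this as Lemma \ref{lem:little}, proved by Gegenbauer series expansion rather than by invoking Funk--Hecke directly, and reaches the Legendre $P$ function via Whipple's formula rather than a ${}_2F_1$ quadratic transformation), and for the Coulomb case insert the regularized Euler representation, apply the Bessel plane-wave formula, and evaluate the remaining $t$-integral by the same Bessel-to-confluent-hypergeometric identity (the paper cites \cite[(6.625) 1.]{GR07}, which produces a ${}_2F_2$ collapsing to ${\bf M}$ as $\varepsilon\to 0^+$). These are only cosmetic differences in which classical identities are cited, so the two arguments are essentially identical.
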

First, we recall several facts about the Gegenbauer polynomials
$C_{\ell}^{\frac{d-2}{2}}$. They are defined by
\begin{equation*}
C_{\ell}^{\frac{d-2}{2}}(t) \coloneqq \sum_{k=0}^{\lfloor \ell/2 \rfloor}(-1)^k \frac{(\frac{d-2}{2})_{\ell-k}}{k!(\ell-2k)!}(2t)^{\ell-2k},\quad 
\end{equation*}
where $(a)_k \coloneqq a(a+1) \cdots (a+k-1)=\frac{\Gamma(a+k)}{\Gamma(a)}$, and these polynomials satisfy
\begin{equation*}
\sum_{\boldsymbol{m}\in \Sigma_{\ell}}Y_{\ell}^{\boldsymbol{m}}(\theta)\overline{Y_{\ell}^{\boldsymbol{m}}(\theta')}=\frac{\frac{d}{2}-1+\ell}{(\frac{d}{2}-1)|\mathbb{S}^{d-1}|}C_{\ell}^{\frac{d-2}{2}}(\theta \cdot \theta'),\qquad t^n=\frac{n!}{2^n}\sum_{k=0}^{\lfloor n/2\rfloor}\frac{\frac{d-2}{2}+n-2k}{k!(\frac{d-2}{2})_{n-k+1}}C_{n-2k}^{\frac{d-2}{2}}(t)
\end{equation*}
where
$|\mathbb{S}^{d-1}|=\int_{\mathbb{S}^{d-1}}d\theta=\frac{2\pi^{\frac{d}{2}}}{\Gamma(\frac{d}{2})}$ is the surface area of the sphere
and $\lVert Y_{\ell}^{\boldsymbol{m}} \rVert_{L^2}=1$ (see
\cite[(4.34)-(4.35)]{T11part2}). We have the following
lemma.
\begin{lem}\label{lem:little}
We have
\begin{align}
\int_{\mathbb{S}^{d-1}}Y_{\ell}^{\boldsymbol{m}}(\theta)e^{iy\cdot
  \theta}d\theta&=(2\pi)^{\frac{d}{2}}|y|^{1-\frac{d}{2}}i^{\ell}J_{\frac{d}{2}-1+\ell}\big(|y|\big)Y_{\ell}^{\boldsymbol{m}}(\widehat{y})\\
  \int_{\mathbb{S}^{d-1}}Y_{\ell}^{\boldsymbol{m}}(\theta)\bigg(\frac{1-|\xi|^2}{|\xi-\theta|^2}\bigg)^{\alpha}d\theta &=(2\pi)^{\frac{d}{2}}\frac{\Gamma(\alpha+\ell)}{\Gamma(\alpha)}\bigg(\frac{2|\xi|}{1-|\xi|^2} \bigg)^{1-\frac{d}{2}}P_{\frac{d}{2}-1-\alpha}^{-(\frac{d}{2}-1+\ell)}(\frac{1+|\xi|^2}{1-|\xi|^2})Y_{\ell}^{\boldsymbol{m}}(\widehat{\xi}),\label{eq:legend}
\end{align}
for $\alpha \in \mathbb{C} \setminus \mathbb{Z}_{\leq 0}$ and $|\xi|<1$, where $J_{\bullet}$ are the Bessel functions of the first kind.
\end{lem}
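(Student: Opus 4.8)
The plan is to observe that both identities are instances of the Funk--Hecke theorem and then to evaluate the two resulting one-dimensional integrals. Set $\nu\coloneqq\tfrac{d-2}{2}$. Since $|\xi-\theta|^2=1-2|\xi|\,\widehat\xi\cdot\theta+|\xi|^2$, the integrand in the first formula is $Y_\ell^{\boldsymbol m}(\theta)$ times the zonal profile $t\mapsto e^{i|y|t}$ evaluated at $t=\widehat y\cdot\theta$, and the integrand in the second is $Y_\ell^{\boldsymbol m}(\theta)$ times $t\mapsto\bigl((1-|\xi|^2)/(1-2|\xi|t+|\xi|^2)\bigr)^{\alpha}$ at $t=\widehat\xi\cdot\theta$. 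A short computation — expand the zonal profile $g$ in its Gegenbauer series $g=\sum_m g_m C_m^{\nu}$, integrate term by term, and use the addition formula together with the standard orthogonality of the $C_m^{\nu}$ against the weight $(1-t^2)^{\nu-1/2}$ — shows that for any such profile,
\begin{equation*}
\int_{\mathbb{S}^{d-1}}g(\eta\cdot\theta)Y_\ell^{\boldsymbol m}(\theta)\,d\theta=\lambda_\ell(g)\,Y_\ell^{\boldsymbol m}(\eta),\qquad \lambda_\ell(g)=|\mathbb{S}^{d-2}|\,\frac{\ell!}{(d-2)_\ell}\int_{-1}^{1}g(t)\,C_\ell^{\nu}(t)(1-t^2)^{\nu-\frac12}\,dt,
\end{equation*}
where I used $C_\ell^{\nu}(1)=(d-2)_\ell/\ell!$; for $d=2$ all Gegenbauer identities are read in the usual limiting sense (so $C_\ell^{0}$ becomes the Chebyshev polynomial), or one argues directly with Fourier series on $\mathbb{S}^1$. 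Everything thus reduces to the two scalar integrals $\int_{-1}^1 g(t)C_\ell^{\nu}(t)(1-t^2)^{\nu-1/2}\,dt$, followed by bookkeeping of $\Gamma$- and power-of-two factors.

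For $g(t)=e^{irt}$ with $r=|y|$ I would expand the exponential, use the identity $t^n=\tfrac{n!}{2^n}\sum_k\tfrac{\nu+n-2k}{k!(\nu)_{n-k+1}}C_{n-2k}^{\nu}(t)$ recorded above together with Gegenbauer orthogonality to compute the moments $\int_{-1}^1 t^nC_\ell^{\nu}(t)(1-t^2)^{\nu-1/2}\,dt$ (only $n=\ell+2j$, $j\ge0$, contribute), and then recognize the surviving power series in $r$ as $(r/2)^{-\nu}J_{\nu+\ell}(r)$ up to a constant, from the power-series definition of the Bessel function. Substituting into $\lambda_\ell$ and simplifying the $\Gamma$-quotient by the Legendre duplication formula collapses the prefactor precisely to $(2\pi)^{d/2}$, giving the first identity.

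For $g(t)=\bigl(\tfrac{1-\rho^2}{1-2\rho t+\rho^2}\bigr)^{\alpha}$ with $\rho=|\xi|<1$ I would instead assume first $\Re\alpha>0$ (the general case following by analytic continuation in $\alpha$, both sides being holomorphic on $\mathbb{C}\setminus\mathbb{Z}_{\le0}$), pull out $(1-\rho^2)^{\alpha}$, write $(1-t^2)^{\nu-1/2}C_\ell^{\nu}(t)$ via the Rodrigues formula for $C_\ell^{\nu}$, and integrate by parts $\ell$ times; the boundary terms vanish and $\tfrac{d^{\ell}}{dt^{\ell}}(1-2\rho t+\rho^2)^{-\alpha}=(2\rho)^{\ell}(\alpha)_\ell(1-2\rho t+\rho^2)^{-\alpha-\ell}$ pulls out the factor $(\alpha)_\ell=\Gamma(\alpha+\ell)/\Gamma(\alpha)$, which is exactly the Gamma-quotient in the claim. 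What remains is $\int_{-1}^1(1-2\rho t+\rho^2)^{-\alpha-\ell}(1-t^2)^{\nu+\ell-1/2}\,dt$; the substitution $t=1-2u$ turns this into an Euler integral for a ${}_2F_1$ with argument $-4\rho/(1-\rho)^2$, and a short chain of standard hypergeometric transformations (a quadratic transformation of type ${}_2F_1(a,b;2b;\cdot)$, a quadratic transformation for ${}_2F_1(a,a+\tfrac12;c;\cdot)$, and a Pfaff transformation) brings the argument to $-\rho^2/(1-\rho^2)$, whereupon it is recognized as the standard ${}_2F_1$-representation of $P_{\frac d2-1-\alpha}^{-(\frac d2-1+\ell)}\bigl(\tfrac{1+\rho^2}{1-\rho^2}\bigr)$ (using $\bigl(\tfrac{z+1}{z-1}\bigr)^{1/2}=\rho^{-1}$ at $z=\tfrac{1+\rho^2}{1-\rho^2}$); reassembling and once more simplifying the $\Gamma$-factors by duplication produces exactly the stated constant $(2\pi)^{d/2}$ and the power $\bigl(\tfrac{2|\xi|}{1-|\xi|^2}\bigr)^{1-\frac d2}$. (Alternatively one can skip Rodrigues and use the subordination $(1-2\rho t+\rho^2)^{-\alpha}=\tfrac1{\Gamma(\alpha)}\int_0^\infty s^{\alpha-1}e^{-s(1+\rho^2)}e^{2\rho st}\,ds$, apply the modified-Bessel analogue of the first identity inside, and then invoke a Laplace-transform-of-$I_\nu$ formula that outputs the same Legendre function.)

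The genuinely fiddly part is the constant chase — matching all powers of $2$, $\pi$, and the $\Gamma$-values against the normalizations $(2\pi)^{d/2}$, $\Gamma(\alpha+\ell)/\Gamma(\alpha)$, and $\bigl(2|\xi|/(1-|\xi|^2)\bigr)^{1-d/2}$ — together with pinning down the precise transformation chain (and the correct branch of the associated Legendre function for argument $>1$) that lands the ${}_2F_1$ on $P_{\frac d2-1-\alpha}^{-(\frac d2-1+\ell)}$; here the automatic appearance of $(\alpha)_\ell$ from the integration by parts is a useful a priori check that the normalization is right. The degenerate case $\nu=0$ ($d=2$) should be handled by the limiting Chebyshev interpretation of the Gegenbauer identities throughout, or equivalently by the elementary Fourier-series computation on the circle.
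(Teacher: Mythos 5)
Your proposal is correct in outline, and for the reduction step and the first identity it is essentially the paper's argument in different packaging: the paper does not invoke Funk--Hecke by name in the proof (it mentions it only in a remark as a numerical check) but instead expands the zonal profile in powers of $\widehat y\cdot\theta$, converts $t^n$ into Gegenbauer polynomials, and integrates against $Y_\ell^{\boldsymbol m}$ via the addition formula --- which is exactly the computation your $\lambda_\ell(g)$ encapsulates --- and then recognizes the surviving series as the Bessel function, just as you do. Where you genuinely diverge is the second identity. The paper stays with the same series method: it binomially expands $\bigl(1-\tfrac{2\xi\cdot\theta}{1+|\xi|^2}\bigr)^{-\alpha}$, integrates term by term, massages the resulting Gamma-quotient $1/\Gamma(1-\ell-\alpha-2j)$ with Euler reflection and Legendre duplication, and then identifies the sum with the Legendre function via Whipple's formula. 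Your route --- Rodrigues formula plus $\ell$-fold integration by parts to extract $(\alpha)_\ell$, then an Euler integral for a ${}_2F_1(a,b;2b;\cdot)$ and a chain of quadratic/Pfaff transformations landing on the hypergeometric representation of $P^{-\mu}_{\nu}$ --- is a legitimate alternative; its advantage is that the factor $\Gamma(\alpha+\ell)/\Gamma(\alpha)$ appears structurally at the outset rather than emerging from Gamma-function gymnastics at the end, and the analytic continuation in $\alpha$ is unproblematic since both sides are entire in $\alpha$ off $\mathbb{Z}_{\le 0}$ (indeed the left side converges for all $\alpha$ when $|\xi|<1$, so your restriction to $\Re\alpha>0$ is not even needed). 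The cost is that you trade the paper's single appeal to Whipple's formula for a multi-step transformation chain whose branch and constant bookkeeping you have, reasonably, only sketched; the $d=2$ degeneration you flag is handled the same way in both arguments.
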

\begin{rem}
This lemma and the preceding proposition can be easily verified
numerically with the Funk-Hecke formula:
\begin{equation}\label{eq:FunkHecke}
\int_{\mathbb{S}^{d-1}}f(\theta' \cdot \theta) Y_{\ell}^{\boldsymbol{m}}(\theta)d\theta=\frac{|\mathbb{S}^{d-2}|}{C_{\ell}^{\frac{d-2}{2}}(1)}\int_{-1}^1f(t)C_{\ell}^{\frac{d-2}{2}}(t)(1-t^2)^{\frac{d-3}{2}}dt \cdot Y_{\ell}^{\boldsymbol{m}}(\theta').
\end{equation}
The standard way to prove this is to first show the formula for the zonal spherical harmonics, and then use the fact that the zonal functions are a reproducing kernel for the rest of the spherical harmonics (see \cite{M66} and \cite[(7.4.153)]{T18b}).
\end{rem}
\begin{proof}
Note that
\begin{align*}
e^{iy\cdot\theta}=\sum_{n=0}^{\infty}\frac{(i|y|)^n}{n!}(\widehat{y}\cdot \theta)^n=\sum_{n=0}^{\infty}\frac{(i|y|)^n}{2^n}\sum_{k=0}^{\lfloor n/2\rfloor}\frac{\frac{d-2}{2}+n-2k}{k!(\frac{d-2}{2})_{n-k+1}}C_{n-2k}^{\frac{d-2}{2}}(\widehat{y}\cdot \theta).
\end{align*}
Now integrating this against
$Y_{\ell}^{\boldsymbol{m}}(\theta)$, we see that the only terms in the sum that remain are when $n=\ell+2j,k=j$ for $j=0,1,2,\ldots$, and we have
\begin{align*}
\int_{\mathbb{S}^{d-1}}Y_{\ell}^{\boldsymbol{m}}(\theta)e^{iy\cdot\theta}d\theta&=\Big(\frac{d}{2}-1\Big)|\mathbb{S}^{d-1}|Y_{\ell}^{\boldsymbol{m}}(\widehat{y})\sum_{n\in
                                                                                  \ell+2
                                                      \mathbb{Z}_{\geq
                                                                                  0}
                                                                                  }
                                                                                  \frac{(i|y|)^n}{2^n}\frac{1}{(\frac{n-\ell}{2})!(\frac{d-2}{2})_{\frac{n+\ell}{2}+1}}\\
&=\Big(\frac{d}{2}-1\Big)|\mathbb{S}^{d-1}|i^{\ell}\Gamma(\frac{d-2}{2})Y_{\ell}^{\boldsymbol{m}}(\widehat{y})\sum_{j=0
  }^{\infty}
  (-1)^j\bigg(\frac{|y|}{2}\bigg)^{\ell+2j}\frac{1}{j!\Gamma(\frac{d}{2}+\ell+j)}\\
  &=(2\pi)^{d/2}|y|^{1-\frac{d}{2}}i^{\ell}J_{\frac{d}{2}-1+\ell}\big(|y|\big)Y_{\ell}^{\boldsymbol{m}}(\widehat{y}).
\end{align*}
Similarly, note that
\begin{align*}
\bigg(\frac{1-|\xi|^2}{|\xi-\theta|^2}\bigg)^{\alpha}&=\bigg(\frac{1-|\xi|^2}{1+|\xi|^2}
                                                       \bigg)^{\alpha}\frac{1}{(1-\frac{2\xi
                                                       \cdot
             \theta}{1+|\xi|^2})^{\alpha}}=\bigg(\frac{1-|\xi|^2}{1+|\xi|^2}
                                                       \bigg)^{\alpha}\sum_{n=0}^{\infty}\binom{-\alpha}{n}\bigg(\frac{-2|\xi|}{1+|\xi|^2}\bigg)^n(\widehat{\xi}\cdot \theta)^n\\
        &=\bigg(\frac{1-|\xi|^2}{1+|\xi|^2}
                                                       \bigg)^{\alpha}\sum_{n=0}^{\infty}\binom{-\alpha}{n}\bigg(\frac{-2|\xi|}{1+|\xi|^2}\bigg)^n\frac{n!}{2^n}\sum_{k=0}^{\lfloor n/2\rfloor}\frac{\frac{d-2}{2}+n-2k}{k!(\frac{d-2}{2})_{n-k+1}}C_{n-2k}^{\frac{d-2}{2}}(\widehat{\xi}\cdot \theta),
\end{align*}
and then
\begin{align}
&\int_{\mathbb{S}^{d-1}}Y_{\ell}^{\boldsymbol{m}}(\theta)\bigg(\frac{1-|\xi|^2}{|\xi-\theta|^2}\bigg)^{\alpha}d\theta=\Big(\frac{d}{2}-1\Big)|\mathbb{S}^{d-1}|Y_{\ell}^{\boldsymbol{m}}(\widehat{\xi})\bigg(\frac{1-|\xi|^2}{1+|\xi|^2}
                                                                                                                      \bigg)^{\alpha}\Gamma(1-\alpha)\nonumber\\
                                                                                                                    &\qquad \qquad \qquad \qquad \qquad \qquad \qquad \qquad \sum_{n\in
                                                                                                                      \ell
                                                                              +
                                                                                                                      2
                                                                                                                      \mathbb{Z}_{\geq
                                                                                                                      0}}\frac{1}{\Gamma(1-n-\alpha)}\frac{1}{(\frac{n-\ell}{2})!(\frac{d-2}{2})_{\frac{n+\ell}{2}+1}}\bigg(\frac{-|\xi|}{1+|\xi|^2}\bigg)^n\nonumber\\
  &=2\pi^{d/2}Y_{\ell}^{\boldsymbol{m}}(\widehat{\xi})\bigg(\frac{1-|\xi|^2}{1+|\xi|^2}
                                                                                                                      \bigg)^{\alpha}\Gamma(1-\alpha)\sum_{j=0}^{\infty}\frac{1}{j!\Gamma(1-\ell-2j-\alpha)\Gamma(\frac{d}{2}+\ell+j)}\bigg(\frac{-|\xi|}{1+|\xi|^2}\bigg)^{2j+\ell}\label{eq:almostthere}
\end{align}
Now we use Euler's reflection formula and Legendre's duplication
formula to obtain
\begin{align*}
  \frac{1}{\Gamma(1-\ell-\alpha-2j)}=\frac{2^{2j}}{\Gamma(1-\ell-\alpha)}\frac{\Gamma(j+\frac{\ell+\alpha}{2})\Gamma(j+\frac{\ell+\alpha+1}{2})}{\Gamma(\frac{\ell+\alpha}{2})\Gamma(\frac{\ell+\alpha+1}{2})}.
\end{align*}
Using this identity and Whipple's formula (see \cite[Ex. 12.1-12.2]{O74} or \cite[\href{https://dlmf.nist.gov/14.3E18}{(14.3.18)}]{NIST:DLMF}), we see
\eqref{eq:almostthere} is equal to \eqref{eq:legend}.
\end{proof}
Now we are ready to prove the main proposition.
\begin{proof}[Proof of Proposition \ref{prop:main}]
The second statement follows immediately from Lemma
\ref{lem:little} with $\alpha=\frac{d-1}{2}-\lambda i$. For the
first statement, we again use Lemma \ref{lem:little} to obtain
\begin{align*}
P_{\widehat{H}_{\hbar}}(\hbar,\lambda)[Y_{\ell}^{\boldsymbol{m}}](x)&=
  \lim_{\varepsilon
  \to 0^+}\frac{c_{\hbar,\lambda}(2\pi)^{\frac{d}{2}}|y|^{1-\frac{d}{2}}i^{\ell}Y_{\ell}^{\boldsymbol{m}}(\widehat{y})e^{i|y|}}{\Gamma(\varepsilon+\lambda
                                                                      i)\Gamma(\frac{d-1}{2}-\varepsilon-\lambda i)}\\
  &\qquad \qquad \qquad \qquad \qquad \int_0^1e^{-i t|y|}J_{\frac{d}{2}-1+\ell}\big(|y|t\big)(1-t)^{\varepsilon+\lambda
    i-1}t^{-\frac{1}{2}-\varepsilon-\lambda i}dt
\end{align*}
where $y=\frac{x}{\hbar^2\lambda}$ and $\lambda>0$. This integral is well-known in special function theory (see \cite[(6.625) 1.]{GR07}. We have that $P_{\widehat{H}_{\hbar}}(\hbar,\lambda)[Y_{\ell}^{\boldsymbol{m}}](x)$ is equal to 
\begin{align*}
&
  \lim_{\varepsilon
  \to 0^+}\frac{c_{\hbar,\lambda}(2\pi)^{\frac{d}{2}}|y|^{1-\frac{d}{2}}i^{\ell}Y_{\ell}^{\boldsymbol{m}}(\widehat{y})e^{i|y|}}{\Gamma(\varepsilon+\lambda
  i)\Gamma(\frac{d-1}{2}-\varepsilon-\lambda
                                                                      i)}\frac{2^{-\frac{d}{2}+1-\ell}|y|^{\frac{d}{2}-1+\ell}\Gamma(\frac{d-1}{2}+\ell-\lambda i)\Gamma(\varepsilon+\lambda i)}{\Gamma(\frac{d-1}{2}+\ell+\varepsilon)\Gamma(\frac{d}{2}+\ell)}\\
  &\qquad \qquad \qquad{}_2F_2\Big(\frac{d-1}{2}+\ell-\lambda i,
    \frac{d-1}{2}+\ell;\ \frac{d-1}{2}-\ell+\varepsilon,d-1
    +2\ell; -2i|y|\Big)\\
  &=\frac{c_{\hbar,\lambda}2^{d-1}\pi^{\frac{d-1}{2}}\Gamma(\frac{d-1}{2}+\ell-\lambda i)}{\Gamma(\frac{d-1}{2}-\lambda i)} (2i|y|)^{\ell}e^{i|y|}{\bf M}\Big(\frac{d-1}{2}+\ell-\lambda i;\ d -1 +2\ell; -2i|y|\Big)Y_{\ell}^{\boldsymbol{m}}(\widehat{y}),
\end{align*}
as desired.
\end{proof}
Finally, we verify $\mathcal{V}_{\hbar,\lambda}$ takes \eqref{eq:legend2} to \eqref{eq:legend}, completing the diagram of Theorem \ref{th:main} for $Y_{\ell}^{\boldsymbol{m}}$. Suppose without loss of generality that $\lambda>0$, and define
\begin{align*}
g_{\ell}(x) \coloneqq (i|x|)^{\ell}e^{i|x|}{\bf M}\Big(\frac{d-1}{2}+\ell-\lambda i;\ d -1 +2\ell;\ -2i|x|\Big).
\end{align*}
For $|\xi|>1$, we have
\begin{align*}
  &\widehat{g_{\ell}}(\xi)=|\xi|^{1-\frac{d}{2}}(-i)^{\ell}\int_0^{\infty}g_{\ell}(r)r^{\frac{d}{2}}J_{\frac{d}{2}-1+\ell}(|\xi|r)dr\\
  &=\frac{|\xi|^{1-\frac{d}{2}}}{|\Gamma(\frac{d-1}{2}+\ell-\lambda
    i)|^2}\int_0^{1}t^{\frac{d-1}{2}+\ell-\lambda
    i-1}(1-t)^{\frac{d-1}{2}+\ell+\lambda
    i-1}\int_0^{\infty}e^{(1-2t)ir}r^{\frac{d}{2}+\ell}J_{\frac{d}{2}-1+\ell}(|\xi|r)
    drdt  \\
  &=-\frac{2^{\frac{d}{2}+\ell}|\xi|^{\ell}\Gamma(\frac{d+1}{2}+\ell)i}{\sqrt{\pi}|\Gamma(\frac{d-1}{2}+\ell-\lambda
    i)|^2}\underbrace{\int_0^{1}\frac{t^{\frac{d-1}{2}+\ell-\lambda
    i-1}(1-t)^{\frac{d-1}{2}+\ell+\lambda
    i-1}(1-2t)}{(|\xi|^2-(1-2t)^2)^{\frac{d+1}{2}+\ell}}dt}_{\eqqcolon I_{\lambda, \ell}(\xi)}
\end{align*}
where we use \cite[(6.623) 2.]{GR07} for the third equality (of course, strictly speaking, one cannot apply this identity unless the real part of the exponent is strictly negative, but one can make this rigorous via the standard way of introducing decay). We split this integral at $t=1/2$ and do the substitution $u=\ln(t^{-1}-1)$ for $t \in [0, 1/2]$ and $u=-\ln(t^{-1}-1)$ for $t \in [1/2,1]$. We have $t(1-t)=\frac{1}{2(\cosh u -1)},(1-2t)dt= -\frac{\sinh u }{2(\cosh u -1)^2}du$ in both integrals, so we have
\begin{align*}
 I_{\lambda,\ell}(\xi)&=\frac{i}{2^{\frac{d-1}{2}+\ell-1}}\int_0^{\infty}\frac{\big(\frac{1}{\cosh
                     u-1}\big)^{\frac{d-1}{2}+\ell-1}}{(\frac{2}{\cosh
                     u-1}+|\xi|^2-1)^{\frac{d+1}{2}+\ell}}\sin(\lambda u)\frac{\sinh u}{(\cosh u-1)^2}du\\
  &=\frac{i}{2^{\frac{d-1}{2}+\ell-1}(|\xi|^2-1)^{\frac{d+1}{2}+\ell}}\int_0^{\infty}\frac{\sin(\lambda
    u)\sinh u }{(\frac{|\xi|^2+1}{|\xi|^2-1}+\cosh u
    )^{\frac{d+1}{2}+\ell}}du\\
  &\overset{\mathclap{\text{I.B.P.}}}{=}\frac{\lambda i }{2^{\frac{d-1}{2}+\ell-1}(|\xi|^2-1)^{\frac{d+1}{2}+\ell}(\frac{d-1}{2}+\ell)}\int_0^{\infty}\frac{\cos(\lambda u)}{(\frac{|\xi|^2+1}{|\xi|^2-1}+\cosh u )^{\frac{d-1}{2}+\ell}}du
\end{align*}
Now we use the formula (see \cite[\href{https://dlmf.nist.gov/14.12E4}{(14.12.4)}]{NIST:DLMF})
\begin{align*}
P_{-\frac{1}{2}+\lambda i}^{-\mu}(x)=(x^2-1)^{\frac{1}{2}\mu}\frac{\sqrt{2}\Gamma(\mu+\frac{1}{2})}{\sqrt{\pi}|\Gamma(\mu+\frac{1}{2}+\lambda i)|^2}\int_0^{\infty}\frac{\cos \lambda t}{(x+\cosh t)^{\mu +\frac{1}{2}}}dt
\end{align*}
to obtain
\begin{align*}
2^{\ell}\widehat{g_{\ell}}(\xi)=\frac{2\lambda (2|\xi|)^{1-\frac{d}{2}}}{(|\xi|^2-1)^{\frac{3}{2}}}P_{-\frac{1}{2}+\lambda i}^{-(\frac{d}{2}-1+\ell)}(\frac{|\xi|^2+1}{|\xi|^2-1}).
\end{align*}
Once we see
\begin{align*}
(D_{\frac{1}{\hbar \lambda}}\circ
  \mathcal{F}_{\hbar})(\psi_{\hbar,\lambda,\ell,\boldsymbol{m}})(\xi)&=\frac{2^{d-\frac{1}{2}}\pi^{\frac{d}{2}}}{\lambda}\frac{\Gamma(\frac{d-1}{2}+\ell-\lambda
                                                                  i)}{\Gamma(\frac{d-1}{2}-i\lambda)}
                                                                  2^{\ell}\widehat{g_{\ell}}(\xi)Y_{\ell}^{\boldsymbol{m}}( \widehat{\xi})\\
                                                                &=\frac{2^{d+\frac{1}{2}}\pi^{\frac{d}{2}}\Gamma(\frac{d-1}{2}+\ell-\lambda i)}{\Gamma(\frac{d-1}{2}-i\lambda)} \frac{(2|\xi|)^{1-\frac{d}{2}}}{(|\xi|^2-1)^{\frac{3}{2}}}P_{-\frac{1}{2}+\lambda i}^{-(\frac{d}{2}-1+\ell)}(\frac{|\xi|^2+1}{|\xi|^2-1})Y_{\ell}^{\boldsymbol{m}}( \widehat{\xi}),
\end{align*}
verifying the result from the rest of the maps in $\mathcal{V}_{\hbar,\lambda}$ is routine.
\bibliographystyle{alpha-reverse}
\bibliography{refs}

\end{document}